\documentclass[12pt]{amsart}
%
% January 28, 2020
%
\setlength{\textheight}{8.1truein} \setlength{\textwidth}{6.5truein}
\setlength{\oddsidemargin}{-0.06in}
\setlength{\evensidemargin}{-0.06in}
\usepackage{amsmath,amssymb,bm,amsthm,mathrsfs,color}
\newtheorem{theorem}{Theorem}[section]
\newtheorem{lemma}[theorem]{Lemma}
\newtheorem{corollary}[theorem]{Corollary}
\newtheorem{example}[theorem]{Example}
\newtheorem{remark}[theorem]{Remark}
\numberwithin{equation}{section}
\numberwithin{table}{section}
%\numberwithin{figure}{section}
\allowdisplaybreaks[4]
\begin{document}
\def\R{\mathbb{R}}
\def\bK{\mathbb{K}}
\def\argmin{\mathop{\rm argmin}}
\def\fA{\mathscr{A}\hspace{-2pt}}
\def\fK{\mathscr{K}\hspace{-3pt}}
\def\TG{T_{\scriptscriptstyle G}}
\def\EG{E_{\scriptscriptstyle G}}
\def\muG{\mu_{\scriptscriptstyle \bar{G}}}
\def\cT{\mathcal{T}}
\def\tbar{|\!|\!|}
\def\LT{{L_2(I)}}
\title[FEMs for Optimal Control Problems with Derivative Constraints]
{Finite element methods for one dimensional
  elliptic distributed optimal control problems with pointwise constraints on the derivative of the state}
\author{S.C. Brenner}
\address{Susanne C. Brenner, Department of Mathematics and Center for
Computation and Technology, Louisiana State University, Baton Rouge, LA 70803, USA}
\email{brenner@math.lsu.edu}
\author{L.-Y. Sung}
\address{Li-yeng Sung,
 Department of Mathematics and Center for Computation and Technology,
 Louisiana State University, Baton Rouge, LA 70803, USA}
\email{sung@math.lsu.edu}
\author{W. Wollner}
\address{Winnifried Wollner, Department of Mathematics, Technische Universit\"at Darmstadt, Germany}
\email{wollner@mathematik.tu-darmstadt.de}
\thanks{The work of the first and second authors was supported in part
 by the National Science Foundation under Grant Nos.
 DMS-16-20273 and DMS-19-13035.}
\date{\today}
\begin{abstract}
  We investigate  $C^1$ finite element methods for  one dimensional
  elliptic distributed optimal control problems with pointwise constraints on the
  derivative of the state formulated as fourth order variational inequalities for the state variable.
   For the problem with Dirichlet boundary conditions, we use an existing
   $H^{\frac52-\epsilon}$ regularity result for the
   optimal state to derive
   $O(h^{\frac12-\epsilon})$ convergence for the approximation of the optimal state in the $H^2$ norm.
   For the problem with mixed Dirichlet and Neumann boundary conditions, we show that the optimal state belongs to $H^3$ under appropriate assumptions on the data
    and obtain $O(h)$ convergence for the approximation of the optimal state
   in the $H^2$ norm.
\end{abstract}
\subjclass{49J15, 65N30, 65N15}
\keywords{elliptic distributed optimal control problems, pointwise derivative constraints,
 cubic Hermite element}
\maketitle
%%%%%%%%%%%%%%%%%%%%%%%%%%%
\section{Introduction}\label{sec:Introduction}
 Let $I$ be the interval $(-1,1)$ and the
 function $J:L_2(I)\times L_2(I)\longrightarrow \R$ be defined by
\begin{equation}\label{eq:JDef}
  J(y,u)=\frac12 \big[\|y-y_d\|_\LT^2+\beta\|u\|_\LT^2\big],
\end{equation}
 where  $y_d\in L_2(I)$ and $\beta$ is a positive constant.
\par
 The optimal control problem is to
\begin{equation}\label{eq:OCP}
  \text{find}\quad(\bar y,\bar u)=\argmin_{(y,u)\in \bK}J(y,u),
\end{equation}
 where $(y,u)\in H^2(I)\times L_2(I)$ belongs to $\bK$ if and only if
\begin{alignat}{3}
  -y''&=u+f&\qquad&\text{on $I$},\label{eq:PDEConstraint}\\
  y'&\leq \psi &\qquad&\text{on $I$},\label{eq:DerivativeConstraint}
\end{alignat}
 together with the following boundary conditions for $y$:
\begin{subequations}\label{subeq:BoundaryConditions}
\begin{align}
  y(-1)=y(1)&=0, \label{eq:Dirichlet}\\
\intertext{or}
y(-1)=y'(1)&=0.\label{eq:Mixed}
\end{align}
\end{subequations}
\begin{remark}\label{rem:Notation}\rm
  Throughout this paper we will follow standard notation for function spaces and norms that can be
   found, for example, in \cite{Ciarlet:1978:FEM,BScott:2008:FEM,ADAMS:2003:Sobolev}.
\end{remark}
\par
 For the problem with the Dirichlet boundary conditions \eqref{eq:Dirichlet},
 we assume that
\begin{equation}\label{eq:DirichletData}
  f\in H^{\frac12-\epsilon}(I), \;\psi\in H^{\frac32-\epsilon}(I) \quad\forall\,\epsilon>0 \quad\text{and}
   \quad \int_I \psi\,dx>0.
\end{equation}
\par
 For the problem with the mixed boundary conditions \eqref{eq:Mixed}, we assume that
\begin{equation}\label{eq:MixedData}
  f\in H^1(I),\;\psi\in H^2(I) \quad \text{and} \quad \psi(1)\geq 0.
\end{equation}
\begin{remark}\label{rem:Boring}\rm
 In the case of Dirichlet boundary conditions, clearly we need $\int_I\psi\,dx\geq0$ since $\int_I y' dx=0$
    and $y'\leq\psi$.  However $\int_I \psi\,dx=0$ implies
   $\int_I(y'-\psi)dx=0$,
  which together with $y'\leq\psi$ leads to $y'=\psi$.  Hence in this case $\bK$ is a singleton
  and the optimal control problem becomes trivial.
\end{remark}
\par
 The optimal control problem  with
 the Dirichlet boundary conditions \eqref{eq:Dirichlet}
  is a  one dimensional analog
 of the optimal control problems considered in
  \cite{CB:1988:Gradient,CF:1993:Gradient,DGH:2009:Gradient,OW:2011:Gradient,Wollner:2012:Gradient}
  on smooth or convex domains.
 In \cite{CB:1988:Gradient,CF:1993:Gradient}, first order optimality conditions were derived for a
 semilinear elliptic optimization problem with pointwise gradient constraints on smooth domains,
 where the solution of the state equation is in $W^{1,\infty}$ for any feasible control.
 These results were extended to non-smooth domains in \cite{Wollner:2012:Gradient}.
 On the other hand higher dimensional analogs of the optimal control problem with
 the mixed boundary conditions \eqref{eq:Mixed} are absent from the literature.
\par
 Finite element error analysis for the problem with the Dirichlet boundary conditions was first carried out in
 \cite{DGH:2009:Gradient} by a mixed formulation of the elliptic equation
 and a variational discretization of the control, and in \cite{OW:2011:Gradient} by a standard
 $H^1$-conforming discretization with a possible non-variational control discretization.
\par
 The goal of this paper is to show that it is also possible to solve the one dimensional
 optimal control problem with either boundary conditions
 as a fourth order variation inequality for the state variable by a $C^1$ finite element method.
 We note that such an approach has been carried out for elliptic distributed optimal control
 problems with pointwise state constraints in, for example, the papers
 \cite{LGY:2009:Control,BSZ:2013:OptimalControl,BDS:2014:PUMOC,BOPPSS:2016:OC3D,BGPS:2018:Morley,
 BGS:2018:Nonconvex,BSZ:2019:Neumann}.  The analysis in this paper extends the general framework in
 \cite{BSung:2017:State} to the one dimensional problem defined by \eqref{eq:JDef}--\eqref{subeq:BoundaryConditions}.
\par
 The rest of the paper is organized as follows.  We collect information on the optimal control problem
 in Section~\ref{sec:Continuous}.  The construction and analysis of the discrete problem are treated in Section~\ref{sec:Discrete}, followed by numerical results in
 Section~\ref{sec:Numerics}.  We end with some concluding remarks in Section~\ref{sec:Conclusion}.
 The appendices contain derivations of the Karush-Kuhn-Tucker conditions that appear
  in Section~\ref{sec:Continuous}.
\par
 Throughout the paper we will use $C$ (with or without subscript) to denote
 a generic positive constant independent of the mesh sizes.
%%%%%%%%%%%%%%%%%%%%%%%%%%%%%%%%%%%%%%%%%%%%%%%%%%%%%%%%%%%%%%%%%%%%%%%%
\section{The Continuous Problem}\label{sec:Continuous}
 Let the space $V$ be defined by
\begin{subequations}\label{subeq:V}
\begin{alignat}{3}
 V&=\{v\in H^2(I):\, v(-1)=v(1)=0\}&\qquad&\text{for the boundary conditions \eqref{eq:Dirichlet}},
 \label{eq:VDirichlet}
\intertext{and}
 V&=\{v\in H^2(I):\,v(-1)=v'(1)=0\}&\qquad&\text{for the boundary conditions \eqref{eq:Mixed}}.
 \label{eq:VMixed}
\end{alignat}
\end{subequations}
\par
  The minimization problem defined by \eqref{eq:JDef}--\eqref{subeq:BoundaryConditions} can be reformulated
 as the  following problem that only involves $y$:
\begin{equation}\label{eq:ROCP}
  \text{Find}\quad \bar y=\argmin_{y\in K}\frac12\big[\|y-y_d\|_\LT^2+\beta\|y''+f\|_\LT^2\big],
\end{equation}
 where
\begin{equation}\label{eq:KDef}
  K=\{y\in V:\,y'\leq \psi \;\text{on}\;I\}.
\end{equation}
\par
 Note that the closed convex subset $K$ of the Hilbert space
 $V$ is nonempty for either boundary
 conditions.  In the case of the Dirichlet boundary conditions, the function
  $y(x)=\int_{-1}^ x (\psi(t)-\delta)dt$ belongs to $K$ if we take $\delta$ to be $\frac12\int_I\psi\,dx \,(>0)$.
   Similarly, in the case
 of the mixed boundary conditions, the function $y(x)=\int_{-1}^x[\psi(t)-\delta\sin[(\pi/4)(1+t)]dt$
 belongs to $K$ if we take $\delta$ to be $\psi(1)\,(\geq0)$.
\par
  According to the standard theory \cite{ET:1999:Convex},
  there is a unique solution $\bar y$ of \eqref{eq:ROCP}--\eqref{eq:KDef}
  characterized by the fourth order variational inequality
\begin{equation}\label{eq:VI}
    \int_I (\bar y-y_d)(y-\bar y)dx+ \beta\int_I (\bar y''+f)(y''-\bar y'')dx\geq 0
    \qquad\forall\,y\in K.
\end{equation}
\par
 We can express \eqref{eq:VI} in the form of
\begin{equation}\label{eq:NewVI}
  a(\bar y,y-\bar y)\geq \int_I y_d(y-\bar y)dx-\beta\int_I f(y''-\bar y'')dx
  \qquad\forall\,y\in K,
\end{equation}
 where
\begin{equation}\label{eq:aDef}
  a(y,z)=\beta\int_I y''z''dx+\int_I yz\,dx.
\end{equation}
\subsection{The Karush-Kuhn-Tucker Conditions}\label{subsec:KKT}
 The solution of \eqref{eq:VI} is characterized by the following conditions:
\begin{alignat}{3}
  \int_I (\bar y-y_d)z\,dx+\beta\int_I (\bar y''+f)z''\,dx+\int_{[-1,1]} z'd\mu&=0
  &\qquad&\forall\,z\in V,\label{eq:KKT1}\\
    \int_{[-1,1]} (\bar y'-\psi)d\mu&=0,\label{eq:KKT2}
\end{alignat}
 where
\begin{equation}\label{eq:KKT3}
  \text{$\mu$ is a nonnegative finite Borel measure on $[-1,1]$.}
\end{equation}
\par
{  Note that \eqref{eq:KKT2} is equivalent to the statement that
 $\mu$ is supported on the active set
\begin{equation}\label{eq:ActiveSet}
 \fA=\{x\in [-1,1]:\,\bar y'(x)=\psi(x)\}
\end{equation}
 for the derivative constraint \eqref{eq:DerivativeConstraint}.
\par
 We can also express \eqref{eq:KKT1} as
\begin{equation}\label{eq:NewKKT1}
  a(\bar y,z)-\int_I y_d z\,dx+\beta\int_I fz''dx=-\int_{[-1,1]}z'd\mu \qquad\forall\,z\in V.
\end{equation}
}
\par
 The Karush-Kuhn-Tucker
 (KKT) conditions \eqref{eq:KKT1}--\eqref{eq:KKT3} can be derived from
  the general theory on Lagrange multipliers that can be found, for example,
   in  \cite{Luenberger:1969:Optimization,IK:2008:Lagrange}.  For the simple
 one dimensional problem here, they can also be derived directly (cf.  Appendix~\ref{appendix:Dirichlet}
 for the Dirichlet boundary conditions and Appendix~\ref{appendix:Mixed} for
   the mixed boundary conditions).

\begin{remark}\label{rem:Mixedmu}\rm
  In the case of the mixed boundary conditions, additional
   information on the structure of
    $\mu$ (cf. {\eqref{eq:muStructure}}) %(cf. \eqref{eq:nuFormula}--\eqref{eq:muFormula})
     is obtained
     in Appendix~\ref{appendix:Mixed}.
\end{remark}
%
%%%%%%%%%%%%%%%%%%%%%%%%%%%%
\subsection{Dirichlet Boundary Conditions}\label{sec:Dirichlet}
 We will use \eqref{eq:KKT1} to obtain additional regularity for $\bar y$
 that matches the regularity result in \cite{OW:2011:Gradient}.
 The following lemmas are useful for this purpose.
\begin{lemma}\label{eq:Interpolation1}
   We have
\begin{equation}\label{eq:fRegularity}
 \int_I fv'\,dx\leq C_\epsilon|f|_{H^{\frac12-\epsilon}(I)}
  |v|_{H^{\frac12+\epsilon}(I)}\qquad\forall\,v\in H^1(I)
  \quad\text{and}\quad\epsilon\in (0,1/2).
\end{equation}
\end{lemma}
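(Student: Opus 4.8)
The plan is to transplant the estimate to the whole real line by extension operators and then reduce it to an elementary Fourier multiplier bound. Since $\tfrac12-\epsilon<\tfrac12$, the operator of extension by zero is bounded from $H^{\frac12-\epsilon}(I)$ into $H^{\frac12-\epsilon}(\R)$; I would therefore let $\tilde f$ be the extension of $f$ by zero, so that $\tilde f\in H^{\frac12-\epsilon}(\R)$ has support in $\bar I$, equals $f$ on $I$, and satisfies $\|\tilde f\|_{H^{\frac12-\epsilon}(\R)}\le C_\epsilon|f|_{H^{\frac12-\epsilon}(I)}$ (the blow-up of $C_\epsilon$ as $\epsilon\downarrow 0$ being harmless). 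For $v$ I would fix a bounded Sobolev extension operator that is simultaneously bounded on $H^{\frac12+\epsilon}(I)$ and on $H^1(I)$, and let $\tilde v$ be the extension of $v$; then $\tilde v\in H^1(\R)$, $\tilde v=v$ on $I$, and $\|\tilde v\|_{H^{\frac12+\epsilon}(\R)}\le C_\epsilon|v|_{H^{\frac12+\epsilon}(I)}$. Since $\tilde f$ vanishes off $\bar I$ while $\tilde f=f$ and $\tilde v'=v'$ on $I$, we have
\begin{equation*}
  \int_I fv'\,dx=\int_\R \tilde f\,\tilde v'\,dx,
\end{equation*}
and the integral on the right is an ordinary Lebesgue integral because $\tilde f\in L_2(\R)$ and $\tilde v'\in L_2(\R)$.

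Next I would apply Plancherel's theorem on $\R$, together with $\widehat{\tilde v'}(\xi)=i\xi\,\widehat{\tilde v}(\xi)$, to obtain $\big|\int_\R \tilde f\,\tilde v'\,dx\big|\le c\int_\R|\widehat{\tilde f}(\xi)|\,|\xi|\,|\widehat{\tilde v}(\xi)|\,d\xi$ for a fixed normalization constant $c$. The decisive point is the scalar inequality $|\xi|\le(1+\xi^2)^{\frac14-\frac\epsilon2}\,(1+\xi^2)^{\frac14+\frac\epsilon2}$, valid because $|\xi|\le(1+\xi^2)^{\frac12}$; inserting this and applying the Cauchy--Schwarz inequality in $L_2(\R;d\xi)$ gives
\begin{equation*}
  \int_\R|\widehat{\tilde f}(\xi)|\,|\xi|\,|\widehat{\tilde v}(\xi)|\,d\xi
   \le \|\tilde f\|_{H^{\frac12-\epsilon}(\R)}\,\|\tilde v\|_{H^{\frac12+\epsilon}(\R)}.
\end{equation*}
Combining this with the two extension estimates yields $\int_I fv'\,dx\le C_\epsilon|f|_{H^{\frac12-\epsilon}(I)}|v|_{H^{\frac12+\epsilon}(I)}$, as claimed.

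The only delicate ingredient, and the reason the hypothesis $\epsilon>0$ cannot be removed, is the boundedness of extension by zero: it holds on $H^s(I)$ precisely for $s<\tfrac12$ and fails at the endpoint $s=\tfrac12$, which is also why the constant must be allowed to degenerate as $\epsilon\downarrow0$. (This loss is ultimately responsible for the $H^{\frac52-\epsilon}$, rather than $H^3$, regularity of the optimal state under the Dirichlet boundary conditions.) Everything else — Plancherel's theorem, the elementary multiplier bound, the Cauchy--Schwarz inequality, and the boundedness of Sobolev extension operators — is standard and can be found in the references on Sobolev spaces cited above; alternatively one could interpolate the bilinear form $(f,v)\mapsto\int_I fv'\,dx$ between the endpoint estimate $|\int_I fv'\,dx|\le\|f\|_{L_2(I)}|v|_{H^1(I)}$ and a half-derivative counterpart, but tracking the Lions--Magenes spaces near $s=\tfrac12$ makes that route less transparent than the computation above.
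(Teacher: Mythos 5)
Your proof is correct, but it takes a genuinely different route from the paper. The paper's own argument is a two-line application of abstract interpolation: it records the endpoint bounds \eqref{eq:Interp1} for $g\in L_2(I)$ and \eqref{eq:Interp2} for $g\in H^1_0(I)$ (the latter by integration by parts), and then invokes bilinear complex interpolation \cite{BL:1976:Interpolation} together with the Lions--Magenes identifications $[L_2(I),H^1_0(I)]_{\frac12-\epsilon}=H^{\frac12-\epsilon}_0(I)=H^{\frac12-\epsilon}(I)$ and $[H^1(I),L_2(I)]_{\frac12-\epsilon}=H^{\frac12+\epsilon}(I)$. You instead leave the interval: extension of $f$ by zero, a Sobolev extension of $v$, Plancherel on $\mathbb{R}$, the elementary multiplier bound $|\xi|\le(1+\xi^2)^{\frac12}$, and Cauchy--Schwarz. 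The two proofs hinge on the same underlying fact in different guises --- the paper uses $H^s_0(I)=H^s(I)$ for $s<\frac12$, you use boundedness of extension by zero on $H^s(I)$ for $s<\frac12$ --- and in both this is exactly where $\epsilon>0$ and the degeneration of $C_\epsilon$ enter, as you correctly point out. What your route buys is that, once the zero-extension bound is granted, everything else is elementary and uniform in $\epsilon$ (no interpolation theorem is invoked); what the paper's route buys is brevity and the reuse of the same interpolation machinery in Lemma~\ref{lem:UltraWeak}.

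One caveat you should make explicit: the extension bounds you invoke control $\|\tilde f\|_{H^{\frac12-\epsilon}(\mathbb{R})}$ and $\|\tilde v\|_{H^{\frac12+\epsilon}(\mathbb{R})}$ by the \emph{full} norms $\|f\|_{H^{\frac12-\epsilon}(I)}$ and $\|v\|_{H^{\frac12+\epsilon}(I)}$, not by the Slobodeckij seminorms written on your right-hand sides (the zero extension of a nonzero constant has vanishing seminorm on $I$ but nonzero norm on $\mathbb{R}$). So, as written, your chain delivers the estimate with full norms on the right --- which is in fact also what the paper's interpolation argument produces, since the interpolation norms are full norms, so the factor $|f|_{H^{\frac12-\epsilon}(I)}$ in \eqref{eq:fRegularity} should in any case be read in that sense. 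The seminorm in $v$ is then recovered by replacing $v$ with $v$ minus its mean value, since only $v'$ appears on the left. This is a cosmetic repair, not a gap in your argument.
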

\begin{proof}
   Observe that
\begin{alignat}{3}
 \int_I gv'dx&\leq \|g\|_{L_2(I)}|v|_{H^1(I)}&\qquad&\forall\,v\in H^1(I) \label{eq:Interp1}
 \intertext{if $g\in L_2(I)$, and}
 \int_I gv'dx&\leq |g|_{H^1(I)}\|v\|_{L_2(I)}&\qquad&\forall\,v\in H^1(I)\label{eq:Interp2}
\end{alignat}
 if $g\in H^1_0(I)$.
\par
 Recall that $f\in H^{\frac12-\epsilon}(I)$ by the assumption in \eqref{eq:DirichletData}.
 The estimate \eqref{eq:fRegularity} follows from
 \eqref{eq:Interp1}, \eqref{eq:Interp2} and bilinear interpolation
 (cf. \cite[Theorem~4.4.1]{BL:1976:Interpolation}),
  together with the following
  interpolations of Sobolev spaces (cf. \cite[Sections~1.9 and 1.11]{LM:1972:NHBVPI}):
\begin{align*}
  [L_2(I),H^1_0(I)]_{\frac12-\epsilon}=H^{\frac12-\epsilon}_0(I)=H^{\frac12-\epsilon}(I)
  \quad\text{and}\quad
  [H^1(I),L_2(I)]_{\frac12-\epsilon}=H^{\frac12+\epsilon}(I).
\end{align*}
\end{proof}
\par
 Note that the map $z\rightarrow z''$ is an isomorphism between $V$
 (given by \eqref{eq:VDirichlet}) and $L_2(I)$.
 Therefore, by the Riesz representation theorem, for any $\ell\in V'$ we can define $p\in L_2(I)$ by
\begin{equation}\label{eq:Abstractp}
 \int_I pz''\,dx=\ell(z) \qquad\forall\,z\in V.
\end{equation}
\begin{lemma}\label{lem:UltraWeak}
  Given any $s\in [0,1]$,  the function $p$ defined by \eqref{eq:Abstractp}
   belongs to $H^{1-s}(I)$ provided that
\begin{equation}\label{eq:UltraWeakEst}
 \ell(z)\leq C|z|_{H^{1+s}(I)}  \qquad\forall\,z\in H^{1+s}(I).
\end{equation}
\end{lemma}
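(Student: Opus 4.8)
The plan is to deduce the asserted regularity from the size of the distributional derivative $p'$. Since $p\in L_2(I)$ by construction, it suffices to show that $p'$ belongs to $H^{-s}(I)$, after which the conclusion $p\in H^{1-s}(I)$ follows from the standard characterization $H^{1-s}(I)=\{v\in L_2(I):v'\in H^{-s}(I)\}$ of the intermediate Sobolev space (cf. \cite{LM:1972:NHBVPI,BL:1976:Interpolation}). Concretely, I would test $p'$ against $\phi\in C_c^\infty(I)$ and aim for the bound
\begin{equation*}
 \Big|\int_I p\,\phi'\,dx\Big|\le C\|\phi\|_{H^s(I)}\qquad\forall\,\phi\in C_c^\infty(I),
\end{equation*}
which, by density of $C_c^\infty(I)$ in the predual of $H^{-s}(I)$, is precisely the statement $p'\in H^{-s}(I)$. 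The endpoint $s=1$ is immediate, since $|\ell(z)|\le\|p\|_{L_2(I)}|z|_{H^2(I)}$ holds unconditionally and $H^{1-s}(I)=L_2(I)\ni p$; thus the real content is the range $s\in[0,1)$.

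The key step is to realize an arbitrary $\phi\in C_c^\infty(I)$ as $z''$ for a suitable $z\in V$, exploiting that $z\mapsto z''$ is an isomorphism from $V$ (given by \eqref{eq:VDirichlet}) onto $L_2(I)$. Set $c=\tfrac12\int_I\phi\,dx$ and $z(x)=\int_{-1}^x\big(\phi(t)-c\big)\,dt$. Then $z\in H^2(I)$, $z(-1)=0$ and $z(1)=\int_I\phi\,dx-2c=0$, so $z\in V$, while $z'=\phi-c$ and $z''=\phi'$. Consequently, by \eqref{eq:Abstractp},
\begin{equation*}
 \int_I p\,\phi'\,dx=\int_I p\,z''\,dx=\ell(z),
\end{equation*}
and applying the hypothesis \eqref{eq:UltraWeakEst} to $z$ and to $-z$ (both in $V$) yields
$\big|\ell(z)\big|\le C|z|_{H^{1+s}(I)}=C|z'|_{H^s(I)}=C|\phi-c|_{H^s(I)}=C|\phi|_{H^s(I)}\le C\|\phi\|_{H^s(I)}$,
where I have used that the Sobolev seminorm of order $s$ annihilates constants (and, for $s=0$, that $\|\phi-c\|_{L_2(I)}\le\|\phi\|_{L_2(I)}$). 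This is exactly the bound stated above.

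I expect the only delicate point to be the function-space bookkeeping rather than the estimate itself: near $s=\tfrac12$ one must be careful about the identification of $H^{-s}(I)$ as the dual of $\widetilde H^s(I)$ versus $(H^s(I))'$, about which closed subspace of $H^s(I)$ plays the role of the predual, and about invoking the correct interpolation/trace characterization to pass from ``$p\in L_2(I)$ and $p'\in H^{-s}(I)$'' to ``$p\in H^{1-s}(I)$''; all of this is standard and available in \cite{LM:1972:NHBVPI,BL:1976:Interpolation}. An alternative, more in the spirit of Lemma~\ref{eq:Interpolation1}, is to interpolate the two endpoint cases $s=0$ (where \eqref{eq:UltraWeakEst} reads $|\ell(z)|\le C\|z'\|_{L_2(I)}$ and the mean-subtraction argument above gives $p\in H^1(I)$ directly) and $s=1$ (trivial) against the scale $[H^0(I),H^1(I)]_\theta=H^\theta(I)$; the direct argument just presented is essentially that interpolation written out by hand.
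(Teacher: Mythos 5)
Your argument is correct in substance but takes a genuinely different route from the paper. You reduce the lemma to showing $p'\in H^{-s}(I)$, by realizing $\phi'=z''$ for the explicit primitive $z(x)=\int_{-1}^x(\phi-c)\,dt\in V$ and using that the order-$s$ seminorm annihilates the constant $c$; this cleanly exploits the fact that $\ell$ only sees $z'$, and your endpoint cases $s=0,1$ are fine. The paper instead never passes through a characterization of $H^{1-s}(I)$: it proves the two endpoint mapping properties of the solution operator $\ell\mapsto p$, namely $\|p\|_{L_2(I)}\le\|\ell\|_{(H^2(I))'}$ and, when $\ell\in(H^1(I))'$, the identification of $p$ as the $H^1_0(I)$ solution of $\int_I p'q'\,dx=-\ell(q)$ giving $|p|_{H^1(I)}\le\|\ell\|_{(H^1(I))'}$, and then interpolates the operator using $[L_2(I),H^1(I)]_{1-s}=H^{1-s}(I)$ together with the duality identity $[(H^2(I))',(H^1(I))']_{1-s}=(H^{1+s}(I))'$ from \cite{LM:1972:NHBVPI}. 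The trade-off: your route is more elementary and transparent, but its load-bearing step, the characterization $H^{1-s}(I)=\{v\in L_2(I):v'\in H^{-s}(I)\}$ on a bounded interval, is not a Fourier-side triviality as on $\mathbb{R}$ and is not stated in this form in \cite{LM:1972:NHBVPI,BL:1976:Interpolation}; its usual proof (correctly handling the $\widetilde H^{s}(I)$ versus $H^s(I)$ dual near $s=\tfrac12$, which you rightly flag) goes through essentially the same interpolation machinery, so unwinding it reproduces the paper's argument. Your fallback suggestion of interpolating the endpoints is exactly the paper's proof, but note that it requires interpolating the dual (domain) spaces as well, i.e.\ the identity $[(H^2(I))',(H^1(I))']_{1-s}=(H^{1+s}(I))'$, not only the scale $[L_2(I),H^1(I)]_\theta=H^\theta(I)$.
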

\begin{proof}
 On  one hand, if $\ell\in(H^2(I))'$, we have
\begin{equation}\label{eq:UW1}
  \|p\|_{L_2(I)}\leq \|\ell\|_{(H^2(I))'}.
\end{equation}
\par
 On the other hand, if $\ell\in (H^1(I))'$, then the solution $p$ of \eqref{eq:Abstractp}
 can also be defined by the conditions that  $p\in H^1_0(I)$ and
\begin{equation*}
  \int_I p'q'dx=-\ell(q)\qquad\forall\,q\in H^1_0(I).
\end{equation*}
 Hence in this case we have
\begin{equation}\label{eq:UW2}
  |p|_{H^1(I)}\leq \|\ell\|_{(H^1(I))'}.
\end{equation}
\par
 The estimate \eqref{eq:UltraWeakEst} follows from \eqref{eq:UW1}, \eqref{eq:UW2} and
  the following interpolations of  Sobolev spaces (cf. \cite[Sections~1.6 and 1.9]{LM:1972:NHBVPI}):
\begin{align*}
  [L_2(I),H^1(I)]_{1-s}&=H^{1-s}(I)
\intertext{and}
[(H^2(I))',(H^1(I))']_{1-s}&=([H^1(I),H^2(I)]_s)'
    =(H^{1+s}(I))'.
\end{align*}
\end{proof}
\goodbreak
\begin{theorem}\label{thm:DirichletStateRegularity}
  The solution $\bar y$ of \eqref{eq:VI} belongs to $H^{\frac52-\epsilon}(I)$ for all
  $\epsilon\in (0,1/2)$.
\end{theorem}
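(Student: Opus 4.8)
The plan is to use the characterization \eqref{eq:NewKKT1} of $\bar y$ together with a bootstrap argument based on Lemma~\ref{lem:UltraWeak}. First I would isolate the term $a(\bar y,z)$ in \eqref{eq:NewKKT1} and rewrite the KKT equation as
\begin{equation*}
  \beta\int_I \bar y'' z''\,dx = \int_I (y_d-\bar y)z\,dx-\beta\int_I fz''\,dx-\int_{[-1,1]}z'\,d\mu
  \qquad\forall\,z\in V.
\end{equation*}
The goal is to show that the linear functional $\ell(z)$ given by the right-hand side (divided by $\beta$) satisfies the hypothesis \eqref{eq:UltraWeakEst} of Lemma~\ref{lem:UltraWeak} with $s=\tfrac12-\epsilon$, so that $p:=\bar y''$ lies in $H^{\frac12+\epsilon}(I)$, hence $\bar y\in H^{\frac52+\epsilon}(I)$ — wait, one only gets $H^{\frac52-\epsilon}$ because the regularity of $f$ and $\psi$ caps things; I would be careful to track that the final exponent is $\tfrac12-\epsilon$ in the second derivative of $\bar y$.

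The three terms of $\ell$ are estimated separately. The term $\int_I(y_d-\bar y)z\,dx$ is bounded by $\|y_d-\bar y\|_{L_2(I)}\|z\|_{L_2(I)}\le C\|z\|_{L_2(I)}$, which is certainly $\le C|z|_{H^{1+s}(I)}$ for $z\in V$ (using the Poincar\'e-type inequality available on $V$, since $z$ vanishes at an endpoint). The term $\beta\int_I fz''\,dx$ is the delicate analytic piece: here I would integrate by parts, writing $\int_I fz''\,dx = [fz']_{-1}^{1}-\int_I f'z'\,dx$ when $f$ is smooth and then pass to the limit, or more cleanly apply Lemma~\ref{eq:Interpolation1} with $z'$ in place of $v$ — note $z'\in H^1(I)$ — after first moving one derivative. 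Actually the cleanest route: $\int_I f z''dx$; treat $z''=(z')'$ and use \eqref{eq:fRegularity} directly, giving $\int_I f z''\,dx\le C_\epsilon|f|_{H^{\frac12-\epsilon}(I)}|z'|_{H^{\frac12+\epsilon}(I)}\le C|z|_{H^{\frac32+\epsilon}(I)}$; but since we want the bound in terms of $|z|_{H^{1+s}(I)}$ with $s=\tfrac12-\epsilon$, i.e.\ $|z|_{H^{\frac32-\epsilon}(I)}$, this is consistent after relabeling $\epsilon$. The measure term $\int_{[-1,1]}z'\,d\mu$ is bounded by $\|z'\|_{L_\infty([-1,1])}\,\mu([-1,1])\le C\|z'\|_{C[-1,1]}$, and by the Sobolev embedding $H^{\frac12+\epsilon}(I)\hookrightarrow C(\bar I)$ in one dimension this is $\le C|z'|_{H^{\frac12+\epsilon}(I)}+C\|z'\|_{L_2(I)}\le C|z|_{H^{\frac32+\epsilon}(I)}$, again matching $|z|_{H^{1+s}(I)}$ after relabeling; the finiteness of $\mu$ from \eqref{eq:KKT3} is what makes this work.

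Combining the three estimates shows $\ell(z)\le C|z|_{H^{1+s}(I)}$ for all $z\in H^{1+s}(I)$ with $s=\tfrac12-\epsilon$, so Lemma~\ref{lem:UltraWeak} gives $\bar y''=p\in H^{1-s}(I)=H^{\frac12+\epsilon}(I)$, whence $\bar y\in H^{\frac52+\epsilon}(I)\subset H^{\frac52-\epsilon}(I)$; relabeling $\epsilon$ gives the statement as written. One subtlety to address is that Lemma~\ref{lem:UltraWeak} is phrased for $p$ defined via the isomorphism $z\mapsto z''$ from $V$ onto $L_2(I)$, so I must first verify that $\bar y''$ is indeed the representative $p$ of this particular $\ell$, i.e.\ that $\ell(z)=\beta\int_I\bar y''z''\,dx$ holds for \emph{all} $z\in V$ — this is exactly \eqref{eq:NewKKT1} rearranged, so it is immediate. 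The main obstacle is bookkeeping: making sure the Sobolev-embedding and interpolation exponents line up and that the endpoint term $\|z'\|_{L_\infty}$ is legitimately controlled (this needs the one-dimensional embedding $H^{\frac12+\epsilon}\hookrightarrow C$, which fails at $\epsilon=0$, explaining why the result is $H^{\frac52-\epsilon}$ and not $H^{\frac52}$). No genuinely hard analysis is required beyond Lemmas~\ref{eq:Interpolation1} and~\ref{lem:UltraWeak}, which have already done the heavy lifting.
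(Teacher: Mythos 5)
Your overall strategy is exactly the paper's: rearrange \eqref{eq:KKT1}/\eqref{eq:NewKKT1} so that $\beta\int_I \bar y''z''\,dx=\ell(z)$, estimate the three pieces of $\ell$ (the $L_2$ pairing with $y_d-\bar y$, the $f$ term via Lemma~\ref{eq:Interpolation1}, the measure term via the one-dimensional embedding $H^{\frac12+\epsilon}(I)\hookrightarrow C(\bar I)$, i.e.\ \eqref{eq:muRegularity}), and then invoke Lemma~\ref{lem:UltraWeak} and identify $p=\bar y''$. However, there is a genuine error in the exponent bookkeeping at the decisive step. Your three estimates bound $\ell(z)$ by $C|z|_{H^{\frac32+\epsilon}(I)}$, i.e.\ they verify hypothesis \eqref{eq:UltraWeakEst} only with $s=\frac12+\epsilon$, yet you apply Lemma~\ref{lem:UltraWeak} with $s=\frac12-\epsilon$ and conclude $p\in H^{\frac12+\epsilon}(I)$, hence $\bar y\in H^{\frac52+\epsilon}(I)$. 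The proposed ``relabeling of $\epsilon$'' cannot repair this: $H^{\frac32+\epsilon}$ and $H^{\frac32-\epsilon}$ lie on opposite sides of $H^{\frac32}$, and a bound by the \emph{stronger} norm $|z|_{H^{\frac32+\epsilon}(I)}$ does not yield a bound by the weaker norm $|z|_{H^{\frac32-\epsilon}(I)}$; indeed the measure term genuinely requires control of $z'$ in $C(\bar I)$, which fails for $z'\in H^{\frac12-\epsilon}(I)$, and the $f$ term from Lemma~\ref{eq:Interpolation1} likewise needs $|z'|_{H^{\frac12+\epsilon}(I)}$. Your final conclusion $\bar y\in H^{\frac52+\epsilon}(I)$ is in fact false in general: in Example~\ref{example:Dirichlet} the multiplier is a Dirac measure at the origin and $\bar y''$ has a jump there, so $\bar y\notin H^{\frac52}(I)$ (cf.\ Remark~\ref{rem:Sharp}).

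The repair is immediate and turns your argument into the paper's proof verbatim: keep your three estimates, which give $\ell(z)\leq C_\epsilon|z|_{H^{\frac32+\epsilon}(I)}$, apply Lemma~\ref{lem:UltraWeak} with $s=\frac12+\epsilon$ to conclude $p\in H^{1-s}(I)=H^{\frac12-\epsilon}(I)$, and then use $\bar y''=p$ (via the isomorphism $z\mapsto z''$ from $V$ onto $L_2(I)$, as you correctly note) to get $\bar y\in H^{\frac52-\epsilon}(I)$. In other words, the sign of $\epsilon$ in $s$ must be $+$, and the loss to $\frac52-\epsilon$ comes precisely from $1-s$, not from any later ``relabeling.''
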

\begin{proof}
 Note that, by the Sobolev inequality \cite{ADAMS:2003:Sobolev},
\begin{equation}\label{eq:muRegularity}
  \int_I v\,d\mu\leq C_\epsilon |v|_{H^{\frac12+\epsilon}(I)}\qquad\forall\,v\in H^1(I)
  \quad\text{and}\quad\epsilon\in (0,1/2).
\end{equation}
\par
 Let $p\in L_2(I)$ be defined by
\begin{equation}\label{eq:UltraWeak}
  \beta\int_I pz''\,dx=\int_I (y_d-\bar y)z\,dx-\beta\int_I fz''\,dx-\int_{[-1,1]} z'd\mu\qquad\forall\,z\in V,
\end{equation}
 where $V$ is given by \eqref{eq:VDirichlet}.
 It follows from \eqref{eq:fRegularity}, \eqref{eq:muRegularity},
 \eqref{eq:UltraWeak} and
 Lemma~\ref{lem:UltraWeak} (with $s=\frac12+\epsilon$) that
\begin{equation}\label{eq:DirichletAdjointRegularity}
     \text{$p$ belongs to $H^{\frac12-\epsilon}(I)$ for all $\epsilon\in(0,1/2)$}.
\end{equation}
\par
 Comparing \eqref{eq:KKT1} and \eqref{eq:UltraWeak}, we see that
\begin{equation*}
  \int_I \bar y'' z''dx=\int_I pz''\,dx\qquad\forall\,z\in V
\end{equation*}
 and hence $\bar y''=p$, which together with \eqref{eq:DirichletAdjointRegularity} concludes the proof.
\end{proof}
\begin{corollary}\label{cor:DirichletControlRegularity}
 We have $\bar u=-\bar y''-f\in H^{\frac12-\epsilon}(I)$ for all $\epsilon\in (0,1/2)$.
\end{corollary}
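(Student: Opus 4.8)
The plan is to read this off directly from Theorem~\ref{thm:DirichletStateRegularity} together with the data assumption \eqref{eq:DirichletData}. First I would recall that the reduction of the original problem \eqref{eq:OCP} to the state-only problem \eqref{eq:ROCP}--\eqref{eq:KDef} is carried out by using the state equation \eqref{eq:PDEConstraint} to eliminate the control, so that the optimal pair $(\bar y,\bar u)$ necessarily satisfies $\bar u=-\bar y''-f$. Hence the regularity of $\bar u$ is governed entirely by that of $\bar y''$ and of $f$, and nothing further about the optimality system is needed.

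Next, by Theorem~\ref{thm:DirichletStateRegularity} we have $\bar y\in H^{\frac52-\epsilon}(I)$ for every $\epsilon\in(0,1/2)$, and differentiating twice lowers the Sobolev exponent by $2$, so $\bar y''\in H^{\frac12-\epsilon}(I)$ for every such $\epsilon$. On the other hand $f\in H^{\frac12-\epsilon}(I)$ for every $\epsilon>0$ by the standing assumption \eqref{eq:DirichletData}. Since $H^{\frac12-\epsilon}(I)$ is a vector space, the function $\bar u=-\bar y''-f$ belongs to $H^{\frac12-\epsilon}(I)$ for every $\epsilon\in(0,1/2)$, which is exactly the claim.

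There is essentially no obstacle here: the whole content of the corollary is already contained in Theorem~\ref{thm:DirichletStateRegularity}, and what remains is only bookkeeping with Sobolev indices. One could alternatively extract $\bar y''=p$ with $p\in H^{\frac12-\epsilon}(I)$ from the proof of that theorem and then add $-f$, but this amounts to the same short computation.
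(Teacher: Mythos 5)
Your proposal is correct and matches the paper's (implicit) reasoning: the corollary is a direct consequence of Theorem~\ref{thm:DirichletStateRegularity} combined with the assumption $f\in H^{\frac12-\epsilon}(I)$ from \eqref{eq:DirichletData}, since $\bar u=-\bar y''-f$ and differentiating $\bar y\in H^{\frac52-\epsilon}(I)$ twice gives $\bar y''\in H^{\frac12-\epsilon}(I)$. The paper offers no separate proof precisely because this bookkeeping is all that is needed.
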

%
%%%%%%%%%%%%%%%%%%%%%%%
\begin{example}\label{example:Dirichlet}\rm
 We take $\beta=\psi=1$ and the exact solution
\begin{equation}\label{eq:DirichletExactSolution}
  \bar y(x)=\begin{cases}
    -\frac12(x+1)+\frac12(x+1)^3+\frac{1}{12}(1-x^2)^3&\qquad -1< x\leq 0\\[6pt]
    -\frac12(x-1)+\frac12(x-1)^3+\frac{1}{12}(1-x^2)^3&\qquad \hspace{10pt}0\leq x<1
  \end{cases}.
\end{equation}
 It follows from a direct calculation that
\begin{equation*}
  \bar y'(x)=\begin{cases} -\frac12+\frac32(x+1)^2-\frac12x(1-x^2)^2&\qquad -1< x\leq 0\\[6pt]
           -\frac12+\frac32(x-1)^2-\frac12x(1-x^2)^2&\qquad \hspace{10pt}0\leq x<1
           \end{cases},
\end{equation*}
 and
\begin{equation*}
   \bar y''(x)=\begin{cases} 3(x+1)-\frac12(1-6x^2+5x^4)&\qquad -1< x< 0\\[6pt]
            3(x-1)-\frac12(1-6x^2+5x^4)&\qquad \hspace{10pt}0< x<1
           \end{cases}.
\end{equation*}
 It is straightforward to check that $\bar y$ belongs to $K$, $\fA=\{0\}$, and for $z\in V$,
\begin{align}\label{eq:Example1}
  \int_I \bar y''z''dx&=\int_{-1}^0 3(x+1)z''dx+\int_0^1 3(x-1)z''dx
   -\frac12\int_I(1-6x^2+5x^4)z''dx\\
        &=6z'(0)+\int_I gz\,dx,\notag
\end{align}
 where
 $$g(x)=6(1-5x^2).$$
\par
 Now we take
\begin{equation*}
  f(x)=\begin{cases} 7(x^2-1) &\qquad -1<x<0\\[6pt]
                  0&\qquad\hspace{10pt} 0<x<1
                  \end{cases}
\end{equation*}
 so that $f\in H^{\frac12-\epsilon}(I)$ for all $\epsilon>0$ and
\begin{equation}\label{eq:Example2}
  \int_I fz''dx=7\int_{-1}^0 (x^2-1)z''dx
          =-7z'(0)+14\int_{-1}^0 z\,dx\qquad\forall\,z\in V.
\end{equation}
\par
 Putting \eqref{eq:Example1} and \eqref{eq:Example2} together we have
\begin{equation}\label{eq:Example3}
    -\int_I (14\chi_{(-1,0)}+g)z\,dx+ \int_I (\bar y''+f)z''dx+z'(0)=0 \qquad\forall\,z\in V,
\end{equation}
 where
    $$\chi_S(x)=\begin{cases}
      1&\qquad\text{if $x\in S$}\\[4pt]
      0&\qquad\text{if $x\notin S$}
    \end{cases}$$
  is the characteristic function of the set $S$,
 and the KKT conditions \eqref{eq:KKT1}--\eqref{eq:KKT3} are satisfied (with
 $\mu$ being the Dirac point measure at the origin)
 if we choose
\begin{equation}\label{eq:yd}
  y_d=\bar y+14\chi_{(-1,0)}+g,
\end{equation}
\end{example}
\begin{remark}\label{rem:Sharp}\rm
  It follows from Example~\ref{example:Dirichlet} that the regularities of $\bar y$ and $\bar u$ stated in Theorem~\ref{thm:DirichletStateRegularity} and   Corollary~\ref{cor:DirichletControlRegularity}
   are sharp under the assumptions on the data in \eqref{eq:DirichletData}.
\end{remark}
%%%%%%%%%%%%%%%%%%%%%%%%%%%%%%%%%%%%%%%%%%%%%%%%%%%%%%%%%
\subsection{Mixed Boundary Conditions}\label{subsec:Mixed}
 In this case the nonnegative Borel measure $\mu$ on $[-1,1]$ satisfies
 (cf. \eqref{eq:nuFormula}--\eqref{eq:muFormula})
\begin{equation}\label{eq:muStructure}
  d\mu = \beta[\rho\,dx+\gamma d\delta_{-1}],
\end{equation}
 where $\rho\in L_2(I)$ is nonnegative, $\gamma$ is a nonnegative number and $\delta_{-1}$ is the
 Dirac point measure at $-1$.
\begin{theorem}\label{thm:MixedStateRegularity}
 The solution $\bar y$ of \eqref{eq:VI} belongs to $H^3(I)$.
\end{theorem}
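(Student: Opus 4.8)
The plan is to combine the first KKT identity \eqref{eq:KKT1} with the structural description \eqref{eq:muStructure} of the multiplier $\mu$, restrict attention to test functions supported in the open interval, and then appeal to the elementary fact that a distribution on $I$ which annihilates every $\phi\in C_c^\infty(I)$ with $\int_I\phi\,dx=0$ must be a constant. The point of invoking \eqref{eq:muStructure} is that on $(-1,1)$ the measure $\mu$ is absolutely continuous with an $L_2$ density, so the multiplier contributes only an $L_2$ term, matching the improved regularity of the data in \eqref{eq:MixedData}.

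First I would substitute \eqref{eq:muStructure} into \eqref{eq:KKT1} and take $z\in C_c^\infty(I)$, which is a subset of $V$ (given by \eqref{eq:VMixed}) since such a $z$ vanishes together with all its derivatives near $\pm1$. The Dirac contribution $\beta\gamma\,z'(-1)$ then drops out, leaving
\[
 \beta\int_I \bar y''z''\,dx=\int_I(y_d-\bar y)z\,dx-\beta\int_I fz''\,dx-\beta\int_I\rho\,z'\,dx\qquad\forall\,z\in C_c^\infty(I).
\]
Next I would rewrite each term on the right-hand side as an integral against $z'$: setting $G(x)=\int_{-1}^x(y_d-\bar y)(t)\,dt$ (so $G\in H^1(I)$) and integrating by parts, $\int_I(y_d-\bar y)z\,dx=-\int_I Gz'\,dx$; and using $f\in H^1(I)$ from \eqref{eq:MixedData}, $-\beta\int_I fz''\,dx=\beta\int_I f'z'\,dx$ (all boundary terms vanish because $z\equiv0$ near $\pm1$). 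With $w:=-G+\beta f'-\beta\rho\in L_2(I)$ this yields
\[
 \beta\int_I \bar y''z''\,dx=\int_I w\,z'\,dx\qquad\forall\,z\in C_c^\infty(I).
\]

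Finally, as $z$ ranges over $C_c^\infty(I)$ the derivative $\phi=z'$ ranges over exactly those $\phi\in C_c^\infty(I)$ with $\int_I\phi\,dx=0$; the displayed identity then says that the distribution $-\beta(\bar y'')'-w$ annihilates all such $\phi$ and is therefore a constant $c\in\R$. Hence $(\bar y'')'=-(w+c)/\beta\in L_2(I)$, so $\bar y''\in H^1(I)$ and $\bar y\in H^3(I)$ — no boundary conditions enter the definition of $H^3(I)$, so this interior argument is all that is needed.

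As for difficulty: the genuinely substantive ingredient is \eqref{eq:muStructure}, derived in Appendix~\ref{appendix:Mixed} under the data assumptions \eqref{eq:MixedData}, which is what keeps the multiplier term at the $L_2$ level; the matching ingredient on the data side is $f\in H^1(I)$. Granting those, the remaining steps — the two integrations by parts and the distributional lemma — are routine, so I would expect no real obstacle beyond correctly establishing \eqref{eq:muStructure}; conversely, this is precisely why the full strength of the hypotheses \eqref{eq:MixedData} is required here, in contrast with the Dirichlet case where only $H^{5/2-\epsilon}$ is available.
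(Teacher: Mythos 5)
Your argument is correct, and it shares the paper's starting point: both proofs plug the structure formula \eqref{eq:muStructure} into \eqref{eq:KKT1} and use $f\in H^1(I)$ from \eqref{eq:MixedData} to move everything onto $z'$. Where you diverge is the finishing mechanism. The paper keeps the full test space $V$, introduces an auxiliary function $p\in H^1(I;1)$ as the weak solution of the two-point boundary value problem \eqref{eq:MixedAdjointEq} (with $\Phi$ an antiderivative of $(y_d-\bar y)/\beta$), invokes elliptic regularity to get $p\in H^2(I)$, and then identifies $\bar y''=p'$ via the isomorphism $z\mapsto z''$ from $V$ onto $L_2(I)$. You instead discard the boundary and Dirac contributions by testing only with $z\in C_c^\infty(I)$, absorb $y_d-\bar y$ into an antiderivative $G$, and conclude from the classical fact that a distribution annihilating all zero-mean test functions is a constant, so $(\bar y'')'=-(w+c)/\beta\in L_2(I)$; since the distributional derivative on the open interval is characterized by exactly these test functions, this gives $\bar y''\in H^1(I)$ globally, not merely locally, and the boundary conditions are indeed irrelevant to membership in $H^3(I)$. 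Your route is more elementary (no auxiliary boundary value problem, no elliptic regularity theorem, no isomorphism argument), at the cost of discarding the boundary information that the paper's formulation retains; both proofs lean equally on the substantive input, namely \eqref{eq:muStructure} with $\rho\in L_2(I)$ from Appendix~\ref{appendix:Mixed}, which you correctly identify as the real content.
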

\begin{proof}  Recall that $f\in H^1(I)$ by the assumption in \eqref{eq:MixedData}.
 After substituting \eqref{eq:muStructure} into \eqref{eq:KKT1} and
carrying out integration by parts,
 we have
\begin{equation}\label{eq:MixedKKT1}
  \beta\int_I  \bar y''z''\,dx=\int_I (y_d-\bar y) z\,dx+\beta\int_I (f'-\rho)z'dx+\beta[f(-1)-\gamma]z'(-1)\qquad\forall\,z\in V,
\end{equation}
 where $V$ is given by \eqref{eq:VMixed}.
\par
 Let $H^1(I;1)=\{v\in H^1(I):\,v(1)=0\}$ and $p\in H^1(I;1)$ be defined by
\begin{equation}\label{eq:MixedAdjointEq}
   \int_I p'q'dx=-\int_I \Phi q\,dx+\int_I (f'-\rho)qdx+[f(-1)-\gamma]q(-1)
 \qquad\forall\,q\in H^1(I;1),
\end{equation}
 where $\Phi\in H^1(I;1)$ is defined by
\begin{equation}\label{eq:PhiMixed}
   \beta\Phi'=y_d-\bar y.
\end{equation}
\par
 Note that \eqref{eq:MixedAdjointEq} is the weak form of the two-point boundary value problem
\begin{equation*}
  -p''=-\Phi+f'-\rho\quad\text{in $I$}\quad\text{and}\quad
  p'(-1)=\gamma-f(-1),\;p(1)=0,
\end{equation*}
 and hence we can conclude from elliptic regularity that
\begin{equation}\label{eq:MixedAdjointRegularity}
 p\in H^2(I).
\end{equation}
\par
 Finally \eqref{eq:MixedKKT1}--\eqref{eq:PhiMixed} imply
\begin{equation*}
 \int_I \bar y''z''dx=\int_I p'z''\,dx\qquad\forall\,z\in V
\end{equation*}
 and hence $\bar y''=p'$ because the map $z\rightarrow z''$ is also an isomorphism between
 $V$ (defined by \eqref{eq:Mixed}) and $L_2(I)$.
    The theorem then follows from
 \eqref{eq:MixedAdjointRegularity}.
\end{proof}
\begin{corollary}\label{cor:MixedControlRegularity}
  We have $\bar u=-\bar y''-f\in H^1(I)$.
\end{corollary}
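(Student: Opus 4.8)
The plan is to read the control regularity directly off the state regularity just established, using the state equation \eqref{eq:PDEConstraint}. First I would recall why the reduced problem \eqref{eq:ROCP}--\eqref{eq:KDef} is equivalent to the original problem \eqref{eq:OCP}: the control was eliminated precisely through the identity $u=-y''-f$ coming from \eqref{eq:PDEConstraint}, so the optimal pair $(\bar y,\bar u)$ satisfies $\bar u=-\bar y''-f$. Next, by Theorem~\ref{thm:MixedStateRegularity} we have $\bar y\in H^3(I)$, hence $\bar y''\in H^1(I)$. Finally, since $f\in H^1(I)$ by the data assumption \eqref{eq:MixedData} and $H^1(I)$ is a linear space, it follows that $\bar u=-\bar y''-f\in H^1(I)$.

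There is essentially no obstacle here: the full force of the statement is carried by Theorem~\ref{thm:MixedStateRegularity}, and the corollary reduces to the observation that the sum of two $H^1(I)$ functions lies in $H^1(I)$. The only bookkeeping point worth a sentence is that the minimizer $\bar y$ of \eqref{eq:ROCP} does correspond to an admissible control $\bar u$ through \eqref{eq:PDEConstraint}, which is immediate from the way \eqref{eq:ROCP} was obtained from \eqref{eq:OCP}. (By contrast with the Dirichlet case in Corollary~\ref{cor:DirichletControlRegularity}, where $\bar y\in H^{\frac52-\epsilon}(I)$ only gives $\bar u\in H^{\frac12-\epsilon}(I)$, here the improved $H^3$ regularity of the state under the stronger data assumptions \eqref{eq:MixedData} is exactly what upgrades the control to $H^1(I)$.)
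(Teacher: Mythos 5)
Your proposal is correct and coincides with the paper's (implicit) argument: the corollary is stated without proof precisely because it follows immediately from Theorem~\ref{thm:MixedStateRegularity} ($\bar y\in H^3(I)$, so $\bar y''\in H^1(I)$) together with $f\in H^1(I)$ from \eqref{eq:MixedData} and the relation $\bar u=-\bar y''-f$ from \eqref{eq:PDEConstraint}. Nothing further is needed.
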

\begin{example}\label{example:Mixed}\rm
 We take $\beta=\psi=1$, $f=0$ and
 the exact solution is given by
\begin{equation}\label{eq:MixedExactSolution}
  \bar y(x)=\int_{-1}^x p(t)dt,
\end{equation}
 where
\begin{equation}\label{eq:pFormula}
  p(x)=\begin{cases}
    1&\qquad -1< x\leq \frac13\\[4pt]
     \sin \big[\frac{\pi}{4}(9x-1)\big]&\qquad\hspace{8pt}\frac13\leq x< 1
  \end{cases}.
\end{equation}
\par
 We have $\fA=[-1,1/3]$,
 $p\in H^2(I)$,
\begin{equation}\label{eq:pProperties}
   p_+''(1/3)=-({9\pi}/4)^2 \quad\text{and}\quad p(1)=p''(1)=0.
\end{equation}
\par
 If we choose the function $\Phi$ by
\begin{equation}\label{eq:PhiFormula}
  \Phi(x)=\begin{cases}
    -({9\pi}/4)^2 &\qquad -1\leq x\leq \frac13\\[4pt]
     p''(x)&\qquad\hspace{8pt} \frac13\leq x\leq 1
  \end{cases},
\end{equation}
 then $\Phi\in H^1(I;1)$ by \eqref{eq:pProperties} and \eqref{eq:PhiFormula},  and
\begin{equation}\label{eq:1NeumannExampleKKT1}
 \int_I p'q'dx= -\int_I \Phi q\,dx-\int_{-1}^\frac13  ({9\pi}/4)^2q\,dx \qquad\forall\,q\in H^1(I;1).
\end{equation}
 Therefore \eqref{eq:MixedAdjointEq} is valid if we take
\begin{equation}\label{eq:NeumannExamplemu}
  \rho= ({9\pi}/4)^2\chi_{[-1,1/3]} \quad\text{and} \quad \gamma=0.
\end{equation}
\par
 Finally we define $y_d$ according to \eqref{eq:PhiMixed}
 so that
\begin{equation}\label{eq:NeumannExampleyd}
  y_d(x)=\begin{cases}
     \bar y(x)&\qquad -1<x<\frac13\\[4pt]
     \bar y(x)+p'''(x)&\qquad\hspace{8pt} \frac13< x< 1
  \end{cases}.
\end{equation}
\par
 Putting \eqref{eq:MixedExactSolution} and
 \eqref{eq:1NeumannExampleKKT1}--\eqref{eq:NeumannExampleyd} together, we see
 that the KKT conditions \eqref{eq:KKT1}--\eqref{eq:KKT3} are valid provided we define
 the Borel measure $\mu$ by
\begin{equation*}
  d\mu=({9\pi}/4)^2\chi_{[-1,1/3]}dx.
\end{equation*}
\end{example}
%

%%%%%%%%%%%%%%%%%%%%%%%%%%%%%%%%%%%%%%%%%%%%%%%%%%%%%%%%%%%%%
\section{The Discrete Problem}\label{sec:Discrete}
 Let $\cT_h$ be a quasi-uniform partition of $I$ and $V_h\subset V$
  be the cubic Hermite finite
 element space \cite{Ciarlet:1978:FEM} associated with $\cT_h$.  The discrete problem is to
\begin{equation}\label{eq:DP}
  \text{find}\quad \bar y_h=\argmin_{y_h\in K_h}\frac12
  \big[\|y_h-y_d\|_\LT^2+\beta\|y_h''+f\|_\LT^2\big],
\end{equation}
 where
\begin{equation}\label{eq:KhDef}
  K_h=\{y\in V_h:\; P_hy'\leq P_h\psi \quad\text{on $[-1,1]$}\},
\end{equation}
 and $P_h$ is the nodal interpolation operator for the $P_1$ finite
 element space \cite{Ciarlet:1978:FEM,BScott:2008:FEM}
 associated with $\cT_h$.
 In other words the derivative constraint \eqref{eq:DerivativeConstraint}
 is only imposed at the grid points.
\par
 The nodal interpolation operator from $C^1(\bar I)$ onto $V_h$ will be denoted by $\Pi_h$.
 Note that
\begin{equation}\label{eq:C2D}
  \Pi_h y\in K_h \qquad\forall\,y\in K.
\end{equation}
 In particular, the closed convex set $K_h$ is nonempty.
\par
 The minimization problem \eqref{eq:DP}--\eqref{eq:KhDef}
 has a unique solution characterized by the discrete variational inequality
\begin{equation*}%\label{eq:DVI}
    \int_I (\bar y_h-y_d)(y_h-\bar y_h)dx+ \beta\int_I (\bar y_h''+f)(y_h''-\bar y_h'')dx\geq 0
    \qquad\forall\,y_h\in K_h,
\end{equation*}
 which can also be written as
\begin{equation}\label{eq:DVI}
  a(\bar y_h,y_h-\bar y_h)\geq \int_I y_d(y_h-\bar y_h)dx-
     \beta\int_I f(y_h''-\bar y_h'')dx \qquad\forall\,y_h\in K_h.
\end{equation}
\par
 We begin the error analysis by recalling some properties of
 $P_h$ and $\Pi_h$.
\par
  For $0\leq s\leq 1\leq t\leq 2$,
 we have an error estimate
\begin{equation}\label{eq:PhEst}
   \|\zeta-P_h\zeta\|_{H^s(I)}\leq Ch^{t-s}|\zeta|_{H^t(I)} \qquad\forall\,\zeta\in H^t(I)
\end{equation}
 that follows from standard  error estimates for $P_h$ (cf. \cite{Ciarlet:1978:FEM,BScott:2008:FEM})
 and interpolation between Sobolev spaces \cite{ADAMS:2003:Sobolev}.
\par
 For $0\leq s\leq 1$ and $2\leq t\leq 4$, we also have the estimates
\begin{alignat}{3}
  \|\zeta-\Pi_h\zeta\|_\LT+h^2|\zeta-\Pi_h\zeta|_{H^2(I)}& \leq C h^t|\zeta|_{H^t(I)}
    &\qquad&\forall\,\zeta\in H^s(I),\label{eq:PihErrorEstimate1}\\
    |\zeta-\Pi_h\zeta|_{H^{1+s}(I)}&\leq C h^{t-s-1}
    |\zeta|_{H^t(I)}    &\qquad&\forall\,\zeta\in H^s(I)\label{eq:PihErrorEstimate2},
\end{alignat}
 that follow from standard error estimates
  for $\Pi_h$ (cf. \cite{Ciarlet:1978:FEM,BScott:2008:FEM}) and interpolation between Sobolev spaces.
\subsection{An Intermediate Error Estimate}\label{subsec:Abstract}
 Let the energy norm $\|\cdot\|_a$ be defined by
\begin{equation}\label{eq:EnergyNorm}
\|v\|_a^2=a(v,v)=\|v\|_{L_2(I)}^2+\beta|v|_{H^2(I)}^2.
\end{equation}
 We have, by a Poincar\' e$-$Friedrichs inequality \cite{Necas:2012:Direct},
\begin{equation}\label{eq:Equivalence}
 C_1\|v\|_a\leq \|v\|_{H^2(I)}\leq C_2\|v\|_a \qquad\forall\,v\in V.
\end{equation}
\par
  Observe that  \eqref{eq:DVI}, \eqref{eq:EnergyNorm}
   and the Cauchy-Schwarz inequality  imply
\begin{align}\label{eq:Preliminary}
  \|\bar y-\bar y_h\|_a^2&=a(\bar y-\bar y_h,\bar y-y_h)+
      a(\bar y-\bar y_h,y_h-\bar y_h)\notag\\
      &\leq \frac12\|\bar y-\bar y_h\|_a^2+\frac12\|\bar y-y_h\|_a^2+a(\bar y,y_h-\bar y_h)\\
            &\hspace{60pt}
             -\int_I y_d(y_h-\bar y_h)dx+
     \beta\int_I f(y_h''-\bar y_h'')dx\qquad\forall\,y_h\in K_h,\notag
\end{align}
 and we have, by \eqref{eq:KKT2}--\eqref{eq:NewKKT1} and \eqref{eq:KhDef},
\begin{align}\label{eq:KKTRelation}
  &a(\bar y,y_h-\bar y_h)-\int_I y_d(y_h-\bar y_h)dx+
     \beta\int_I f(y_h''-\bar y_h'')dx\notag\\
     &\hspace{40pt}=\int_{[-1,1]} (\bar y_h'-y_h')d\mu\notag\\
     &\hspace{40pt}=\int_{[-1,1]} (\bar y_h'-P_h\bar y_h')d\mu+
     \int_{[-1,1]} (P_h\bar y_h'-P_h\psi)d\mu+\int_{[-1,1]} (P_h\psi-\psi)d\mu\\
        &\hspace{90pt}+\int_{[-1,1]} (\psi-\bar y')d\mu+
         \int_{[-1,1]} (\bar y'-y_h')d\mu,\notag\\
       &\hspace{40pt}\leq \int_{[-1,1]} (\bar y_h'-P_h\bar y_h')d\mu+\int_{[-1,1]} (P_h\psi-\psi)d\mu
       +\int_{[-1,1]} (\bar y'-y_h')d\mu
       \notag
\end{align}
 for all $y_h\in K_h$.
\par
 It follows from \eqref{eq:Preliminary} and \eqref{eq:KKTRelation} that
\begin{align}\label{eq:AbstractErrorEstimate}
 \|\bar y-\bar y_h\|_a^2&\leq 2\Big[\int_{[-1,1]} (\bar y_h'-P_h\bar y_h')d\mu
   +\int_{[-1,1]}(P_h\psi-\psi)d\mu\Big]\\
   &\hspace{50pt} +\inf_{y_h\in K_h}\Big(\|\bar y-y_h\|_a^2+
   2\int_{[-1,1]} (\bar y'-y_h')d\mu\Big).\notag
\end{align}
\subsection{Dirichlet Boundary Conditions}\label{subsec:DirichletError}
  The following estimates will allow us to produce concrete error estimates from
  \eqref{eq:AbstractErrorEstimate}.  First of all, we have
\begin{align}\label{eq:DirichletPhEstimate}
   \int_{[-1,1]} (\bar y_h'-P_h\bar y_h')d\mu&
   =\int_{[-1,1]} \big[(\bar y_h'-\bar y')-P_h(\bar y_h'-\bar y')\big]d\mu+
      \int_{[-1,1]} (\bar y'-P_h\bar y')d\mu\\
     &\leq C_\epsilon \Big( h^{\frac12-\epsilon}\|\bar y-\bar y_h\|_a
     +h^{1-\epsilon}|y|_{H^{\frac52-\epsilon}(I)}\Big)&\forall\,\epsilon>0\notag
 \end{align}
 by \eqref{eq:muRegularity},  Theorem~\ref{thm:DirichletStateRegularity},
 \eqref{eq:PhEst} and \eqref{eq:Equivalence}; secondly
\begin{equation}\label{eq:DirichletpsiError}
 \int_{[-1,1]} (P_h\psi-\psi)d\mu\leq C_\epsilon h^{1-\epsilon} |\psi|_{H^{\frac32-\epsilon}(I)} \qquad\forall\epsilon>0
\end{equation}
 by the assumption on $\psi$ in \eqref{eq:DirichletData} and \eqref{eq:PhEst}.  Finally,
 in view of Theorem~\ref{thm:DirichletStateRegularity}, \eqref{eq:muRegularity},
  \eqref{eq:PihErrorEstimate1}--\eqref{eq:PihErrorEstimate2} and
   \eqref{eq:Equivalence}, we also have
\begin{equation}\label{eq:DirichletPihEstimate}
  \|\bar y-\Pi_h \bar y\|_a^2+2\int_{[-1,1]} \big[\bar y'-(\Pi_h\bar y)'\big]d\mu
  \leq C_\epsilon h^{1-\epsilon}
  \qquad\forall\,\epsilon>0.
 \end{equation}
\par
 Putting \eqref{eq:C2D}, \eqref{eq:AbstractErrorEstimate}--\eqref{eq:DirichletPihEstimate}
 and Young's inequality together,
  we arrive at the estimate
\begin{align}\label{eq:DirichletConcretErrorEstimate}
  \|\bar y-\bar y_h\|_a\leq C_\epsilon h^{\frac12-\epsilon}
\end{align}
 that is valid for any $\epsilon>0$,
 which in turn  implies the following
 result, where $\bar u_h=-\bar y_h''-f$ is the approximation for $\bar u=-\bar y''-f$.
\begin{theorem}\label{thm:DirichletOCPErrors}
  Under the assumptions on the data in \eqref{eq:DirichletData}, we have
\begin{equation*}
  |\bar y-\bar y_h|_{H^1(I)}+\|\bar u-\bar u_h\|_\LT\leq C_\epsilon h^{\frac12-\epsilon}
    \qquad \forall\,\epsilon>0.
\end{equation*}
\end{theorem}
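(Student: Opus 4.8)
The plan is to read off Theorem~\ref{thm:DirichletOCPErrors} as an essentially immediate consequence of the energy-norm estimate \eqref{eq:DirichletConcretErrorEstimate}. By the definition \eqref{eq:EnergyNorm} of the energy norm we have $|v|_{H^1(I)}\le\|v\|_{H^2(I)}$ and $\sqrt\beta\,|v|_{H^2(I)}\le\|v\|_a$ for every $v\in V$, so combining with the norm equivalence \eqref{eq:Equivalence} turns \eqref{eq:DirichletConcretErrorEstimate} into
\[
 |\bar y-\bar y_h|_{H^1(I)}+|\bar y-\bar y_h|_{H^2(I)}\le C\|\bar y-\bar y_h\|_a\le C_\epsilon h^{\frac12-\epsilon}\qquad\forall\,\epsilon>0 .
\]
Since $\bar u=-\bar y''-f$ and $\bar u_h=-\bar y_h''-f$ by definition, the data $f$ cancels and $\bar u-\bar u_h=-(\bar y''-\bar y_h'')$, hence $\|\bar u-\bar u_h\|_\LT=|\bar y-\bar y_h|_{H^2(I)}$. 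Substituting the displayed bound then yields the asserted estimate for $|\bar y-\bar y_h|_{H^1(I)}+\|\bar u-\bar u_h\|_\LT$.

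All of the genuine content therefore sits in establishing \eqref{eq:DirichletConcretErrorEstimate} itself, which I would obtain by starting from the intermediate estimate \eqref{eq:AbstractErrorEstimate}, inserting the three bounds \eqref{eq:DirichletPhEstimate}, \eqref{eq:DirichletpsiError} and \eqref{eq:DirichletPihEstimate} — the last one using the admissible choice $y_h=\Pi_h\bar y\in K_h$ licensed by \eqref{eq:C2D} — and then applying Young's inequality to absorb the term proportional to $h^{\frac12-\epsilon}\|\bar y-\bar y_h\|_a$ produced by \eqref{eq:DirichletPhEstimate} into the left-hand side. Each of these three bounds rests in turn on the $H^{\frac52-\epsilon}$ regularity of $\bar y$ from Theorem~\ref{thm:DirichletStateRegularity}, together with the approximation/interpolation estimates \eqref{eq:PhEst}, \eqref{eq:PihErrorEstimate1}--\eqref{eq:PihErrorEstimate2} and the Sobolev-type bound \eqref{eq:muRegularity} for the multiplier measure.

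The one place that requires care — and the reason the rate is $h^{\frac12-\epsilon}$ rather than $h^{1-\epsilon}$ — is the measure term $\int_{[-1,1]}(\bar y_h'-P_h\bar y_h')\,d\mu$ in \eqref{eq:DirichletPhEstimate}: after splitting off $\bar y'$ it contributes $C_\epsilon h^{\frac12-\epsilon}\|\bar y-\bar y_h\|_a$, which can only be handled by absorption, not by a higher power of $h$, since there is no Galerkin-orthogonality-type cancellation available for it. Beyond that absorption step, the only bookkeeping is to note that the three occurrences of $\epsilon$ attached to $f$, to $\psi$, and to $\bar y$ may all be taken equal, so no genuine obstacle remains.
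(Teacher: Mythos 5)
Your proposal is correct and follows essentially the same route as the paper: the theorem is read off from the energy-norm bound \eqref{eq:DirichletConcretErrorEstimate} via \eqref{eq:Equivalence} and the identity $\bar u-\bar u_h=-(\bar y''-\bar y_h'')$, and that bound is obtained exactly as in the paper by combining \eqref{eq:C2D}, \eqref{eq:AbstractErrorEstimate}--\eqref{eq:DirichletPihEstimate} with Young's inequality to absorb the $h^{\frac12-\epsilon}\|\bar y-\bar y_h\|_a$ term. Your remark on why the rate is limited to $h^{\frac12-\epsilon}$ is consistent with the paper's argument.
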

\begin{remark}\label{rem:HOD}\rm
  Numerical results in Section~\ref{sec:Numerics} indicate that $|\bar y-\bar y_h|_{H^1(I)}$
  is of higher order.
\end{remark}
%%%%%%%%%%%%%%
\subsection{Mixed Boundary Conditions}\label{subsec:MixedError}
 In this case we have
\begin{align}\label{eq:MixedPhEstimate}
   \int_{[-1,1]} (\bar y_h'-P_h\bar y_h')d\mu&=\beta\Big[\int_I (\bar y_h'-P_h\bar y_h')\rho\,dx+
      \gamma(\bar y_h'-P_h\bar y_h')(-1)\Big]\notag\\
      &=\beta\Big[\int_I \big[(\bar y_h'-\bar y')-P_h(\bar y_h'-\bar y')\big]\rho\,dx+
       \int_I (\bar y'-P_h\bar y')\rho\,dx\Big]\\
     &\leq C  \Big( h\|\bar y-\bar y_h\|_a
     +h^2|\bar y|_{H^3(I)}\Big)\notag
 \end{align}
 by \eqref{eq:muStructure},  Theorem~\ref{thm:MixedStateRegularity},
 \eqref{eq:PhEst} and \eqref{eq:Equivalence};
\begin{align}\label{eq:MixedpsiEerror}
  \int_{[-1,1]} (P_h\psi-\psi)d\mu
          &=\beta\int_I (P_h\psi-\psi)\rho dx\leq Ch^2
\end{align}
 by the assumption on $\psi$ in \eqref{eq:MixedData}, \eqref{eq:muStructure} and \eqref{eq:PhEst}; and
\begin{equation}\label{eq:MixedPihEstimate}
  \|\bar y-\Pi_h \bar y\|_a^2+2\int_{[-1,1]} \big[\bar y'-(\Pi_h\bar y)'\big]d\mu
  \leq Ch^2
 \end{equation}
 by \eqref{eq:muStructure}, Theorem~\ref{thm:MixedStateRegularity},
 \eqref{eq:PihErrorEstimate1}, \eqref{eq:PihErrorEstimate2} and \eqref{eq:Equivalence}.
\par
 Combining \eqref{eq:AbstractErrorEstimate} and \eqref{eq:MixedPhEstimate}--\eqref{eq:MixedPihEstimate}
 with Young's inequality,
 we find
\begin{equation}\label{eq:MixedStateError}
  \|\bar y-\bar y_h\|_a\leq Ch,
\end{equation}
 which immediately implies the following result,
  where $\bar u_h=-\bar y_h''-f$ is the approximation
 for $\bar u=-\bar y''-f$.
\begin{theorem}\label{thm:MixedOCPErrors}
  Under the assumptions on the data in \eqref{eq:MixedData}, we have
\begin{equation*}
  |\bar y-\bar y_h|_{H^1(I)}+\|\bar u-\bar u_h\|_\LT\leq Ch.
\end{equation*}
\end{theorem}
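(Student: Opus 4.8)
The plan is to read the theorem off the energy-norm estimate \eqref{eq:MixedStateError}. Granting $\|\bar y-\bar y_h\|_a\le Ch$, the norm equivalence \eqref{eq:Equivalence} together with $|\cdot|_{H^1(I)}\le\|\cdot\|_{H^2(I)}$ gives $|\bar y-\bar y_h|_{H^1(I)}\le C_2\|\bar y-\bar y_h\|_a\le Ch$, while the identities $\bar u=-\bar y''-f$ and $\bar u_h=-\bar y_h''-f$ yield $\bar u-\bar u_h=-(\bar y''-\bar y_h'')$, so that $\|\bar u-\bar u_h\|_{L_2(I)}=|\bar y-\bar y_h|_{H^2(I)}\le C_2\|\bar y-\bar y_h\|_a\le Ch$ by \eqref{eq:Equivalence} and \eqref{eq:MixedStateError}. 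Adding the two bounds gives the assertion, so the only real content is the proof of \eqref{eq:MixedStateError}, which I now sketch.

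Starting from the intermediate estimate \eqref{eq:AbstractErrorEstimate}, I would substitute the structure \eqref{eq:muStructure} of $\mu$ and use the regularity $\bar y\in H^3(I)$ supplied by Theorem~\ref{thm:MixedStateRegularity} to bound the three groups of terms on the right-hand side. For the consistency term $\int_{[-1,1]}(\bar y_h'-P_h\bar y_h')\,d\mu$, the Dirac mass at $-1$ contributes nothing, since $-1$ is a grid point and $P_h$ is nodal interpolation, so $(\bar y_h'-P_h\bar y_h')(-1)=0$; the surviving part $\beta\int_I(\bar y_h'-P_h\bar y_h')\rho\,dx$ is split, as in \eqref{eq:MixedPhEstimate}, into a piece estimated by $Ch\|\bar y-\bar y_h\|_a$ through \eqref{eq:PhEst} (with $s=0$, $t=1$) and \eqref{eq:Equivalence}, and a piece estimated by $Ch^2|\bar y|_{H^3(I)}$ through \eqref{eq:PhEst} (with $s=0$, $t=2$). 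The data term collapses by \eqref{eq:muStructure} to $\beta\int_I(P_h\psi-\psi)\rho\,dx$, which is $O(h^2)$ by \eqref{eq:PhEst} and the hypothesis $\psi\in H^2(I)$ in \eqref{eq:MixedData}.

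For the infimum in \eqref{eq:AbstractErrorEstimate} I would use the admissible competitor $y_h=\Pi_h\bar y$, which lies in $K_h$ by \eqref{eq:C2D}. Then $\|\bar y-\Pi_h\bar y\|_a^2=O(h^2)$ by \eqref{eq:PihErrorEstimate1}, \eqref{eq:PihErrorEstimate2} (with $t=3$) and \eqref{eq:Equivalence}, while $\int_{[-1,1]}[\bar y'-(\Pi_h\bar y)']\,d\mu=\beta\int_I[\bar y'-(\Pi_h\bar y)']\rho\,dx=O(h^2|\bar y|_{H^3(I)})$---the Dirac part again vanishing since $\Pi_h$ reproduces first derivatives at nodes---which is \eqref{eq:MixedPihEstimate}. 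Collecting these bounds, \eqref{eq:AbstractErrorEstimate} reduces to $\|\bar y-\bar y_h\|_a^2\le Ch\|\bar y-\bar y_h\|_a+Ch^2$, and Young's inequality absorbs the linear term into the left-hand side to give \eqref{eq:MixedStateError}.

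The point requiring care, which I regard as the crux of the scheme, is that the consistency contribution $\int_I(\bar y_h'-P_h\bar y_h')\rho\,dx$ cannot be controlled by the data alone: the factor $\bar y_h'$ is unknown, so it must be handled by the triangle-inequality splitting above, and the argument closes only because the resulting unavoidable term is an $O(h)$---not $O(1)$---multiple of $\|\bar y-\bar y_h\|_a$, which legitimizes the Young's inequality absorption. Everything else is a routine use of the interpolation error estimates \eqref{eq:PhEst} and \eqref{eq:PihErrorEstimate1}--\eqref{eq:PihErrorEstimate2} together with the $H^3$ regularity of $\bar y$ from Theorem~\ref{thm:MixedStateRegularity}.
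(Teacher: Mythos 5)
Your proposal is correct and follows essentially the same route as the paper: it reproduces the estimates \eqref{eq:MixedPhEstimate}--\eqref{eq:MixedPihEstimate}, the choice $y_h=\Pi_h\bar y$ via \eqref{eq:C2D}, and the Young's inequality absorption leading to \eqref{eq:MixedStateError}, from which the theorem follows by \eqref{eq:Equivalence} and $\bar u-\bar u_h=-(\bar y''-\bar y_h'')$. Your explicit remark that the Dirac contributions at $-1$ vanish because $P_h$ and $\Pi_h$ are nodal is exactly the (implicit) reason the paper drops those terms.
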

\begin{remark}\label{rem:HOM}\rm
 Numerical results in Section~\ref{sec:Numerics} again indicate that $|\bar y-\bar y_h|_{H^1(I)}$
  is of higher order.
\end{remark}
%%%%%%%%%%%%%%%%%%%%%%%%%%%%%%%%%%%%%%%%%%%%%%%%%%%%%
\section{Numerical Results}\label{sec:Numerics}
 In the first experiment, we solved the problem in Example~\ref{example:Dirichlet} on a uniform
 mesh with dyadic grid points.
 The errors of $\bar y_h$ in various norms are reported in Table~\ref{table:DUniform}.  We observed
 $O(h^2)$ convergence in $|\cdot|_{H^2(I)}$ and higher convergence in the lower order norms.
 This phenomenon can be justified as follows.
\par
 Note that for this example
 the first term on the right-hand side of \eqref{eq:AbstractErrorEstimate}
  vanishes because $\mu$ is supported at the origin which is one of the grid points where
  $\bar y_h$ (resp. $\psi$) and $P_h\bar y_h$ (resp., $P_h\psi$)
  assume identical values.
    The remaining term on the right-hand side
  of \eqref{eq:AbstractErrorEstimate} is bounded by
 \begin{align*}
   \|\bar y-(\Pi_h\bar y)\|_a^2+2\int_I\big[\bar y'-(\Pi_h\bar y)'\big]d\mu= \|\bar y-(\Pi_h\bar y)\|_a^2\leq Ch^4,
 \end{align*}
 where we have used  the estimate \eqref{eq:PihErrorEstimate1},
  with $I$ replaced by the intervals $(-1,0)$ and $(0,1)$,
  the norm equivalence \eqref{eq:Equivalence},
   and the  fact that $\bar y$ defined by \eqref{eq:DirichletExactSolution}
   is a sextic polynomial on each of these intervals.
\begin{table}[ht]
\begin{tabular}{|c|c|c|c|l|}
\hline
\rule{0pt}{2.5ex}
DOFs&$\|\bar y-\bar y_h\|_{L_2(I)}$ &$\|\bar y-\bar y_h\|_{L_\infty(I)}$
 &$|\bar y-\bar y_h|_{H^1(I)}$&$|\bar y-\bar y_h|_{H^2(I)}$\\[2pt]
\hline
\rule{0pt}{2.5ex}
\hspace{-4pt}$2^1$  &1.082369 \hspace{-2pt}e$-$01& 1.545433 \hspace{-2pt}e$-$01& 3.788872 \hspace{-2pt}e$-$01& 2.178934 \hspace{-2pt}e$+$00\\
 $2^2$& 5.972336 \hspace{-2pt}e$-$03 &7.142850 \hspace{-2pt}e$-$03 &2.452678 \hspace{-2pt}e$-$02 & 7.191076 \hspace{-2pt}e$-$01\\
$2^3$& 1.223603 \hspace{-2pt}e$-$03 &1.806781 \hspace{-2pt}e$-$03& 8.520509 \hspace{-2pt}e$-$03 &1.114423 \hspace{-2pt}e$-$01\\
 $2^4$& 8.653379 \hspace{-2pt}e$-$05& 1.732075 \hspace{-2pt}e$-$04 &1.200903 \hspace{-2pt}e$-$03 &3.118910 \hspace{-2pt}e$-$02\\
$2^5$& 5.561252 \hspace{-2pt}e$-$06 &1.295847 \hspace{-2pt}e$-$05 &1.542654 \hspace{-2pt}e$-$04 &8.001098 \hspace{-2pt}e$-$03\\
 $2^6$ & 3.508709 \hspace{-2pt}e$-$07 & 8.804766 \hspace{-2pt}e$-$07 &1.929895 \hspace{-2pt}e$-$05 &2.012955 \hspace{-2pt}e$-$03\\
$2^7$ & 2.199861 \hspace{-2pt}e$-$08 & 5.729676 \hspace{-2pt}e$-$08 &2.303966 \hspace{-2pt}e$-$06 &5.040206 \hspace{-2pt}e$-$04\\
\hline
\end{tabular}
\par\medskip
\caption{Numerical results for Example~\ref{example:Dirichlet} on meshes with dyadic grid points}
\label{table:DUniform}
\end{table}
\par
 In the second experiment we solved the problem
 in Example~\ref{example:Dirichlet} on slightly perturbed meshes where
 the origin is no longer a grid point.  The errors are reported in
   Table~\ref{table:DPerturbed}.  We observed $O(h^{0.5})$ convergence in the $|\cdot|_{H^2(I)}$
 (which agrees with Theorem~\ref{thm:DirichletOCPErrors}) and $O(h)$ convergence in the lower
 order norms.
\begin{table}[ht]
\begin{tabular}{|c|c|c|c|c|}
\hline
\rule{0pt}{2.5ex}
DOFs&$\|\bar y-\bar y_h\|_{L_2(I)}$ &$\|\bar y-\bar y_h\|_{L_\infty(I)}$
&$|\bar y-\bar y_h|_{H^1(I)}$&$|\bar y-\bar y_h|_{H^2(I)}$\\[2pt]
\hline
\rule{0pt}{2.5ex}
\hspace{-4pt}$2$+$2^1$ &5.972336 \hspace{-2pt}e$-$03 &7.142850 \hspace{-2pt}e$-$03 &2.452678 \hspace{-2pt}e$-$02 &1.910760 \hspace{-2pt}e$-$01\\
$2$+$2^2$ & 3.045281 \hspace{-2pt}e$-$02 &3.279329 \hspace{-2pt}e$-$02  &1.188082 \hspace{-2pt}e$-$01 &1.285638 \hspace{-2pt}e$+$00\\
$2$+$2^3$&3.187355 \hspace{-2pt}e$-$02 &3.182310 \hspace{-2pt}e$-$02& 1.071850 \hspace{-2pt}e$-$01 &1.022401 \hspace{-2pt}e$+$00\\
$2$+$2^4$&3.216705 \hspace{-2pt}e$-$02& 3.175715 \hspace{-2pt}e$-$02 &1.048464 \hspace{-2pt}e$-$01& 8.070390 \hspace{-2pt}e$-$01\\
$2$+$2^5$& 3.220153 \hspace{-2pt}e$-$02& 3.175558 \hspace{-2pt}e$-$02 &1.044763 \hspace{-2pt}e$-$01 &6.496040 \hspace{-2pt}e$-$01\\
$2$+$2^6$&1.814346 \hspace{-2pt}e$-$02& 2.074403 \hspace{-2pt}e$-$02 &5.740999 \hspace{-2pt}e$-$02 &4.408863 \hspace{-2pt}e$-$01\\
$2$+$2^7$ & 9.754613 \hspace{-2pt}e$-$03 &1.167762 \hspace{-2pt}e$-$02& 2.983716 \hspace{-2pt}e$-$02 &3.016101 \hspace{-2pt}e$-$01\\
\hline
\end{tabular}
\par\medskip
\caption{Numerical results for Example~\ref{example:Dirichlet} on meshes where
 $0$ is not a grid point}
\label{table:DPerturbed}
\end{table}
\par
 In the third experiment, we solved the problem in Example~\ref{example:Mixed}
 on a uniform mesh with dyadic grid points.
   We observed $O(h)$ convergence in $|\cdot|_{H^2(I)}$ from the results in Table~\ref{table:MDyadic}
 (which agrees with Theorem~\ref{thm:MixedOCPErrors}) and $O(h^2)$ convergence
 in the lower order norms.
\begin{table}[ht]
\begin{tabular}{|c|c|c|c|c|}
\hline
\rule{0pt}{2.5ex}
DOFs&$\|\bar y-\bar y_h\|_{L_2(I)}$ &$\|\bar y-\bar y_h\|_{L_\infty(I)}$
&$|\bar y-\bar y_h|_{H^1(I)}$&$|\bar y-\bar y_h|_{H^2(I)}$\\[2pt]
\hline
\rule{0pt}{2.5ex}
\hspace{-4pt}$1+2^2$&1.406813 \hspace{-2pt}e$+$01&1.658318 \hspace{-2pt}e$+$01&1.269278 \hspace{-2pt}e$+$01&2.070271 \hspace{-2pt}e$+$01\\
$1+2^3$&4.654073 \hspace{-2pt}e$+$00&4.618639 \hspace{-2pt}e$+$00&4.221134 \hspace{-2pt}e$+$00&1.379991 \hspace{-2pt}e$+$01\\
$1+2^4$&1.574605 \hspace{-2pt}e$+$00&1.683229 \hspace{-2pt}e$+$00&1.376788 \hspace{-2pt}e$+$00&8.047102 \hspace{-2pt}e$+$00\\
$1+2^5$&3.745106 \hspace{-2pt}e$-$01&3.781562 \hspace{-2pt}e$-$01&3.252880 \hspace{-2pt}e$-$01&4.073631 \hspace{-2pt}e$+$00\\
$1+2^6$&9.856747 \hspace{-2pt}e$-$02&1.022258 \hspace{-2pt}e$-$01&8.574934 \hspace{-2pt}e$-$02&2.081469 \hspace{-2pt}e$+$00\\
$1+2^7$&2.378457 \hspace{-2pt}e$-$02&2.368760 \hspace{-2pt}e$-$02&2.075267 \hspace{-2pt}e$-$02&1.037836 \hspace{-2pt}e$+$00\\
$1+2^8$&5.802109 \hspace{-2pt}e$-$03&5.661900 \hspace{-2pt}e$-$03&5.218542 \hspace{-2pt}e$-$03&5.212004 \hspace{-2pt}e$-$01\\
\hline
\end{tabular}
\par\medskip
\caption{Numerical results for Example~\ref{example:Mixed} on meshes with dyadic grid points}
\label{table:MDyadic}
\end{table}
\par
 In the final experiment, we solved the problem in Example~\ref{example:Mixed} by a uniform
 mesh that includes $1/3$ as a grid point.  The errors are reported in
 Table~\ref{table:MNonDyadic}.
 We observed similar convergence behavior as the dyadic case, but
 the magnitude of the errors is smaller.  This can be justified by the observation that
 the term (cf. \eqref{eq:MixedPhEstimate})
     $$\int_I (\bar y-P_h\bar y')\rho\,dx=\int_0^\frac13 (\bar y-P_h\bar y')\rho\,dx=0$$
 because $\bar y(x)=1+x$ on the active set $\fA=[-1,1/3]$ and $1/3$ is a grid point.
 On the other hand the corresponding integral is nonzero for dyadic meshes.
\begin{table}[ht]
\begin{tabular}{|c|c|c|c|c|}
\hline
\rule{0pt}{2.5ex}
DOFs&$\|\bar y-\bar y_h\|_{L_2(I)}$ &$\|\bar y-\bar y_h\|_{L_\infty(I)}$
&$|\bar y-\bar y_h|_{H^1(I)}$&$|\bar y-\bar y_h|_{H^2(I)}$\\[2pt]
\hline
\rule{0pt}{2.5ex}
\hspace{-4pt}$1+3\cdot2^1$&2.448013 \hspace{-2pt}e$+$00&2.343224 \hspace{-2pt}e$+$00 &2.236575 \hspace{-2pt}e$+$00&1.082726 \hspace{-2pt}e$+$01\\
$1+3\cdot2^2$&6.406496 \hspace{-2pt}e$-$01&6.095607 \hspace{-2pt}e$-$01&5.795513 \hspace{-2pt}e$-$01&5.541353 \hspace{-2pt}e$+$00\\
$1+3\cdot2^3$&1.616111 \hspace{-2pt}e$-$01&1.539557 \hspace{-2pt}e$-$01&1.461718 \hspace{-2pt}e$-$01&2.778978 \hspace{-2pt}e$+$00\\
$1+3\cdot2^4$&4.025578 \hspace{-2pt}e$-$02&3.858795 \hspace{-2pt}e$-$02&3.665436 \hspace{-2pt}e$-$02&1.390198 \hspace{-2pt}e$+$00\\
$1+3\cdot2^5$&9.822613 \hspace{-2pt}e$-$03&9.653193 \hspace{-2pt}e$-$03&9.268709 \hspace{-2pt}e$-$03&6.951994 \hspace{-2pt}e$-$01\\
$1+3\cdot2^6$&2.233582 \hspace{-2pt}e$-$03&2.413687 \hspace{-2pt}e$-$03& 2.657435 \hspace{-2pt}e$-$03&3.476583 \hspace{-2pt}e$-$01\\
\hline
\end{tabular}
\par\medskip
\caption{Numerical results for Example~\ref{example:Mixed}
on uniform meshes where $1/3$ is a grid point}
\label{table:MNonDyadic}
\end{table}
%
%%%%%%%%%%%%%%%%%%%%%%%%%%%%%%%%%%%%%%%%%%%%%%%%%%%%%%%%%%%%%
\section{Concluding Remarks}\label{sec:Conclusion}
 We have demonstrated in this paper that the convergence analysis developed in \cite{BSung:2017:State}
 can be adopted to elliptic distributed optimal control problems with pointwise constraints on the derivatives of the state, at least in a simple one dimensional setting.
\par
 The results in this paper can be extended to two-sided constraints of the form
 $
  \psi_1\leq y'\leq \psi_2,
 $
 where $\psi_i$ and $\psi_2$ are sufficiently regular and $\psi_1<0<\psi_2$ on $I$.  In particular, they
 are valid for the constraints defined by $|y'|\leq 1$.
\par
 It would be interesting to find out if the results in this paper can be extended to higher dimensions.
{ We note that the higher dimensional analogs of the variational inequality for the derivative (cf.
 \eqref{eq:NewNeumannVI}) lead to  obstacle problems for the vector Laplacian.
 Such obstacle problems are of independent interest and appear to be open.  }
%%%%%%%%%%%%%%%%%%%%%%%%%%%%%%%%%%%%%%%%%%%%%%%%%%%%%%%%%%%%%
\appendix
\section{KKT Conditions for the Dirichlet Boundary Conditions}\label{appendix:Dirichlet}
 First we note that
\begin{equation}\label{eq:DirichletActiveSet}
  \fA\neq [-1,1]
\end{equation}
 since $\int_I y'dx=0$ and $\int_I\psi\,dx>0$, and also
\begin{equation}\label{eq:Quotient}
  \{y':\,y\in V\}=\{v\in H^1(I):\,\int_I v\,dx=0\}=H^1(I)/\R.
\end{equation}
\par
 Let $\fK=\{v\in H^1(I)/\R:\,v\leq \psi \;$ in $I\}$.   We can rewrite \eqref{eq:VI}
 in the form of
\begin{equation}\label{eq:NeumannVI}
  \int_I \Phi(q-p)dx+\int_I(p'+f)(q'-p')dx\geq 0\qquad\forall\,q
   \in \fK,
\end{equation}
 where
\begin{equation}\label{eq:wDef}
 p=\bar y'
 \end{equation}
 and
 the function $\Phi\in H^1(I)/\R$ is defined by
\begin{equation}\label{eq:DirichletPhi}
  \beta\Phi'=y_d-\bar y.
\end{equation}
\par
 Let the bounded linear functional $L:H^1(I)/\R\longrightarrow\R$ be defined by
\begin{equation}\label{eq:LDef}
  L v=\int_I\Phi v\,dx+\int_I(p'+f)v'dx.
\end{equation}
\par
 Observe that \eqref{eq:NeumannVI} implies
\begin{equation}\label{eq:Support}
  L v=0 \qquad\text{if $v\in H^1(I)/\R$ and $\fA\cap \mathrm{supp}\, v=\emptyset$},
\end{equation}
 since in this case $\pm\epsilon v+p\in\fK$ for $0<\epsilon\ll 1$.
\par
 { Since the active set $\fA$ is a closed subset of $[0,1]$,
 according to \eqref{eq:DirichletActiveSet} there exist two numbers $a,b\in I$ such that
 $a<b$ and $[a,b]\cap \fA=\emptyset$.}
  Let $G=(-1,a)\cup(b,1)$.  Then we have
 (i) $\fA\,\cap I\subset G$
 and
 (ii) there exists  a bounded linear extension operator $\EG:H^1(G)\longrightarrow H^1(I)/\R$.
\begin{remark}\label{rem:Extension}\rm
  Observe that a bounded linear extension operator $E_G^*:H^1(G)\longrightarrow H^1(I)$
  can be constructed by reflections (cf. \cite{ADAMS:2003:Sobolev}).  The operator
  $E_G$ can then be defined by
      $$E_G(v)=E_G^*(v)-\Big(\int_I E_G^*(v)dx\Big)\phi,$$
  where $\phi$ is a smooth function with compact support in $(a,b)$ such that $\int_I \phi\,dx=1$.
\end{remark}
\par
 We define a bounded linear map $\TG:H^1(G)\longrightarrow\R$ by
\begin{equation}\label{eq:TG}
  \TG v= L \tilde v
\end{equation}
 where $\tilde v$ is any function in $H^1(I)/\R$ such that $\tilde v=v$ on $G$.
 $\TG$ is well-defined because
  the existence of $\tilde v$ is guaranteed by the extension operator $\EG$ and the
  independence of the choice of $\tilde v$ follows from  \eqref{eq:Support}.
\par
 Let $v\in H^1(G)$ be nonnegative.  Then $-\epsilon \tilde v+p\in \fK\,$ for
 $0<\epsilon\ll1$ because $p\leq \psi$ on $G$ and $p<\psi$ on the compact set $[a,b]=I\setminus G$.
 Hence we have
\begin{equation}\label{eq:Positivity}
  -\TG v=\epsilon^{-1}\TG (-\epsilon v)=\epsilon^{-1}L(-\epsilon\tilde v)\geq0
\end{equation}
 by \eqref{eq:NeumannVI} and \eqref{eq:LDef}.
\par
 It follows from \eqref{eq:Positivity} and the Riesz-Schwartz Theorem
 \cite{Rudin:1966:RC,Schwartz:1966:Distributions}  for nonnegative functionals
 that
\begin{equation}\label{eq:muG}
  \TG v=-\int_{[-1,a]\cup[b,1]} v\,d\muG \qquad \forall\,v\in H^1(G).
\end{equation}
where $\muG$ is a nonnegative Borel measure on $[-1,a]\cup[b,1]$.
\par
 Because of \eqref{eq:TG} and \eqref{eq:muG}, we have
\begin{equation}\label{eq:LRepresentation}
  -L v=-T(v\big|_G)=\int_{[-1,a]\cup[b,1]}v\,d\muG \qquad \forall\,v\in H^1(I)/\R,
\end{equation}
 and the observation \eqref{eq:Support} implies that $\muG$ is supported on $\fA$.
\par
 We conclude from \eqref{eq:LDef} and \eqref{eq:LRepresentation} that
\begin{equation}\label{eq:KKT}
  \int_I\Phi v\,dx+\int_I(p'+f)v'dx+\int_{[-1,1]} v\,
  d\tilde\mu=0 \qquad\forall\,v\in H^1(I)/\R,
\end{equation}
 where $\tilde\mu$ is the trivial extension of $\muG$ to $[-1,1]$.  It follows that
\begin{equation*}
   \int_{[-1,1]} (\bar y'-\psi)d\mu=0,
\end{equation*}
 where $\mu=\beta\tilde\mu$,
 and in view of  \eqref{eq:Quotient},  \eqref{eq:wDef}, \eqref{eq:DirichletPhi} and \eqref{eq:KKT},
\begin{equation*}
  \int_I(\bar y-y_d)z\,dx+\beta\int_I(\bar y''+f)z''dx+\int_{[-1,1]} z'd\mu=0 \qquad\forall\,z\in V.
\end{equation*}
%
%%%%%%%%%%%%%%%%%%%%%%%%%%%%%%%%%%%%%%%%%%%%%%%%%%%%
\section{KKT Conditions for the Mixed Boundary Conditions}\label{appendix:Mixed}
 In this case we have, by \eqref{eq:VMixed},
\begin{equation}\label{eq:SD}
  \{y':y\in V\}=\{v\in H^1(I):\,v(1)=0\}=H^1(I;1).
\end{equation}
\par
 Let $\fK=\{v\in H^1(I;1):\,v\leq\psi\quad\text{in $I$}\}$.
 We can rewrite \eqref{eq:VI} in the form of
\begin{equation}\label{eq:PreNeumannVI}
  \int_I \Phi(q-p)dx+\int_I (p'+f)(q'-p')dx\geq 0\qquad\forall\,q
   \in \fK,
\end{equation}
 where
\begin{equation}\label{eq:pDef}
 p=\bar y'\in \fK,
 \end{equation}
 and the function $\Phi\in H^1(I;1)$ is defined by
\begin{equation}\label{eq:Phi}
  \beta\Phi'=y_d-\bar y.
\end{equation}
\par
 Note that $f\in H^1(I)$ by the assumption in \eqref{eq:MixedData}.
 After integration by parts, the inequality \eqref{eq:PreNeumannVI} becomes
\begin{equation}\label{eq:NewNeumannVI}
 - f(-1)[q(-1)-p(-1)]+\int_I (\Phi-f')(q-p)\,dx
 +\int_I p'(q'-p')dx\geq 0\qquad\forall\,q\in\fK.
\end{equation}
 The variational inequality defined by \eqref{eq:pDef} and
 \eqref{eq:NewNeumannVI} is equivalent to a second
 order obstacle problem with mixed boundary conditions
 whose coincidence set is identical to the active set $\fA$ in \eqref{eq:ActiveSet}.
\par
 Since $\psi\in H^2(I)$ by the assumption in \eqref{eq:MixedData},
 we can apply the penalty method in \cite{MS:1972:MixedVI} to show that
\begin{equation}\label{eq:pRegularity}
 \text{the solution $p$ of \eqref{eq:NewNeumannVI} belongs to $H^2(I)$},
\end{equation}
 and, after integration by parts,  we have
\begin{equation}\label{eq:NeumannKKT}
 -f(-1)q(-1)+\int_I (\Phi-f') q\,dx+\int_I p'q'dx
 +\int_{[-1,1]} q\,d\nu=0 \qquad\forall\,q\in H^1(I;1),
\end{equation}
 where
\begin{equation}\label{eq:nuFormula}
  d\nu=(p''+f'-\Phi)dx+(f(-1)+p'(-1))d\delta_{-1},
\end{equation}
 and $\delta_{-1}$ is the Dirac point measure at $-1$.
\par
  The variational inequality
 \eqref{eq:NewNeumannVI} is then equivalent to
\begin{subequations}\label{subeq:Sign}
\begin{alignat}{3}
  p&\leq\psi &\qquad&\text{in}\;I,\label{eq:Sign0}\\
  p''+f'-\Phi&\geq0 &\qquad&\text{in}\;I,\label{eq:Sign1}\\
    f(-1)+p'(-1)&\geq0, \label{eq:Sign2}\\
    \int_{[-1,1]}(p-\psi)d\nu&=0.\label{eq:Complementarity}
\end{alignat}
\end{subequations}
\par
 Consequently the KKT conditions \eqref{eq:KKT1}--\eqref{eq:KKT3} hold
  for the Borel measure
\begin{equation}\label{eq:muFormula}
  \mu=\beta\nu.
\end{equation}
\begin{remark}\label{rem:Specialpsi}\rm
In the special case where $f=0$ and $\psi$ is a positive constant, the condition
   \eqref{eq:Complementarity} implies $p'(-1)=0$ if $-1\not\in\fA$,
   and the conditions \eqref{eq:Sign0} and \eqref{eq:Sign2} imply $p'(-1)=0$ if $-1\in\fA$.
  Therefore we have $p'(-1)=0$ if  $f=0$ and $\psi$ is a positive constant,
  in which case  $\mu$ is absolutely continuous with respect to
  the Lebesgue measure.  Hence it is necessary to choose $\gamma=p'(-1)=0$ in Example~\ref{example:Mixed}.
\end{remark}
%%%%%%%%%%%%%%%%%%%%%%%%%%%%%%%%%%%%%%

\end{document}